\title{Gibbs and equilibrium measures for some families of subshifts}
\author{Tom Meyerovitch}
 \theoremstyle{plain}
\newtheorem{thm}{Theorem}[section]
\newtheorem{lem}{Lemma}[section]
\newtheorem{prop}[thm]{Proposition}
\newtheorem{cor}[thm]{Corollary}
\newtheorem*{thm*}{Theorem}
\newcommand{\ZZ}{\mathbb{Z}}
\newcommand{\hX}{\widehat{X}}
\newcommand{\RR}{\mathbb{R}}
\newcommand{\NN}{\mathbb{N}}
\newcommand{\ZD}{{\mathbb{Z}^d}}
\newcommand{\htop}{h_{\mathit{top}}}
\newcommand{\T}{\mathfrak{T}}
\newcommand{\Homeo}{\mathit{Homeo}}
\newcommand{\SVD}{\mathit{SV}_d}
\begin{document}
\begin{abstract}

 For SFTs, any equilibrium
measure is Gibbs, as long a $f$ has $d$-summable variation. This is
a theorem of Lanford and Ruelle. Conversely, a theorem of
Dobru{\v{s}}in states that for strongly-irreducible subshifts,
shift-invariant Gibbs-measures are equilibrium measures.

Here we prove a
generalization of the Lanford-Ruelle theorem: for all subshifts, any
equilibrium measure for a function with $d$-summable variation is
``topologically  Gibbs''. This is a relaxed notion which coincides
with the usual notion of a Gibbs measure for SFTs.

 In the second part of the paper, we study Gibbs
and equilibrium measures for some interesting
 families of subshifts: $\beta$-shifts, Dyck-shifts and Kalikow-type
shifts (defined below). In all of these cases, a Lanford-Ruelle type
theorem holds. For each of these families we provide a specific
proof of the result.

\end{abstract}

\maketitle

\section{Introduction}

In the starting point of our study are a couple of related theorems
originating in mathematical physics: One is a theorem of Lanford and
Ruelle and the other is a theorem of Dobru{\v{s}}in. Phrased in the
terminology of symbolic dynamics, these theorems deal with
equilibrium and Gibbs measures for subshifts of finite type. Since
these theorems aim to reflect a physical theory, it is of interest
to explore the ``robustness'' of the phenomena which these theorems
describe, by relaxing the assumptions of the mathematical model. Our
effort is to explore the validity of the conclusions of these
theorems for subshifts which are not of finite type.

Section \ref{sec:prem_and_defs} is an overview classical definition
and results in this field, along with some new definitions such as
the ``topological Gibbs relation'', which are needed for a concise
formulation of some of our results.

In section \ref{sec:dobru} we state and explain the classical
Dobru{\v{s}}in  and  Lanford-Ruelle theorems.  We then formulate and
prove generalized versions of the Lanford-Ruelle theorem, and prove
a simple example for the breakdown of the ``obvious''
generalization.

These first two sections deal with a very general setting which
includes both one dimensional and multidimensional subshifts. The
last $3$ sections specialize with some specific families of
non-sofic $1$-dimensional shifts, each of which is of special
interest: Section \ref{sec:kalikow} investigates a  family of
subshifts which we name ``Kalikow-type subshifts'', which have a
natural definition in terms of a skew-product. For these shifts
there is a total breakdown of rigidity for Gibbs measures, yet the
conclusion of the Lanford-Ruelle theorem holds for measures of
maximal entropy. In section \ref{sec:beta} $\beta$-shifts are shown
to have a unique tail-invariant measure. Section \ref{sec:dyck}
contains a proof of a restricted form of Lanford-Ruelle theorem for
the Dyck shift, which is a supplement to the results of
\cite{meyerovitch_dyck}.

\textbf{Acknowledgments:} This work is a part of the author's Ph.D
thesis, written in Tel-Aviv University under the supervision of
Professor Jon Aaronson. The author acknowledges the support of
support of the Crown Family Foundation Doctoral Fellowships ,USA.

\section{\label{sec:prem_and_defs}Preliminaries and definitions}

\subsection{The Gibbs relation} Let $T:X \to X$ be a homeomorphism of
a compact metric space. The \emph{Gibbs} relation of $(X,T)$ (also
called \emph{homoclinic} relation, or \emph{double-tail} relation
\cite{aaro-nakada-2007,petersen_schmidt97}) is defined as the pairs
of points in $X$ which have asymptotically converging orbits:
$$\T(X,T) := \{ (x,y) \in X \times X :~ \lim_{|n|\to \infty}d(T^nx,T^ny)=0\}$$
We abbreviate  this either by  $\T=\T(X,T)$  or by $\T_X=T(X,T)$,
according to the context.

$\T$ is an equivalence relation. Denote the $\T$-equivalence class
of $x \in X$ by $\T(x)=\{y \in X : ~ (x,y)\in \T\}$.

\begin{lem}\label{lem:gibbs_ctble_expansinve}
Whenever $T$ is expansive, $\T(x)$ is at most countable for all $x
\in X$
\end{lem}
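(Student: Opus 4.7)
The plan is to exploit the expansiveness constant to partition $\T(x)$ into countably many pieces, each of which is $\delta$-separated in a suitable metric and thus, by compactness, finite.

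Let $\delta > 0$ be an expansiveness constant for $T$, so that $d(T^n y, T^n y') \le \delta$ for all $n \in \ZZ$ forces $y = y'$. For each $N \ge 1$, I define
$$B_N := \{ y \in X : d(T^n x, T^n y) \le \delta/3 \text{ for all } |n| \ge N \}.$$
Each $B_N$ is closed (as the intersection of closed conditions), hence compact. Moreover, any $y \in \T(x)$ eventually satisfies $d(T^n x, T^n y) \le \delta/3$ for all sufficiently large $|n|$, so $\T(x) \subseteq \bigcup_{N\ge 1} B_N$. It therefore suffices to show that each $B_N$ is finite.

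Fix $N$ and suppose $y, y' \in B_N$ are distinct. For $|n| \ge N$, the triangle inequality gives $d(T^n y, T^n y') \le 2\delta/3 < \delta$, so expansiveness forces the existence of some $|n| < N$ with $d(T^n y, T^n y') > \delta$. Introducing the metric $d_N(y,y') := \max_{|n|\le N} d(T^n y, T^n y')$ (which is equivalent to $d$ since $T$ is a homeomorphism and $X$ is compact), this says that $B_N$ is a $\delta$-separated subset of the compact space $(X, d_N)$, hence finite.

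The only subtle point is the threshold $\delta/3$ rather than $\delta/2$: I want a strict margin so that two distinct points of $B_N$ have $d$-distance strictly less than $\delta$ on the tails $|n|\ge N$, allowing expansiveness to be invoked on the complementary finite window. Everything else is routine. Combining the two steps yields $\T(x) \subseteq \bigcup_N B_N$ as a countable union of finite sets, which is the claim.
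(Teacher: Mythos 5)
Your proof is correct and follows essentially the same route as the paper's: write $\T(x)$ as a countable union over $N$ of the sets of points shadowing $x$ to within a fraction of the expansive constant outside $[-N,N]$, then use expansiveness plus the triangle inequality to show each such set is separated in the finite-window metric, hence finite by compactness. The only differences (the $\delta/3$ margin instead of $\delta/2$, and noting that each $B_N$ is closed) are cosmetic refinements of the same argument.
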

\begin{proof}
Let $\epsilon>0$ be an expansive constant for $T$. Thus, $x\ne y$
implies $d(T^n x,T^n y) > \epsilon$ for some $n \in \ZZ$. Fix $x \in
X$. We have:
$$\T(x) \subset  \bigcup_{n \in \NN}\{y \in X :~  d(T^kx,T^ky)<\frac{\epsilon}{2} ~\forall  |k|>n\}$$

Since $T$ is expansive, it follows that for each $n \in \NN$, and
any distinct points $y_1,y_2 \in \{y \in X :~
d(T^kx,T^ky)<\frac{\epsilon}{2} \forall |k|>n\}$ there exists $k \le
n$  for which $d(T^kx,T^ky)\ge \epsilon$.
Thus $\T(x)$ is a countable union of 
sets which are $\epsilon$-separated according to the metric
$$ d_n(x,y):= \max_{|k| <n} d(T^kx,T^ky)$$
 By compactness of $(X,d)$, $(X,d_n)$ is also compact, and so  any
 $\epsilon$-separated set is finite. It follows that $\T(x)$ is at most countable.
\end{proof}

In a similar manner, the Gibbs relation of a $\ZZ^d$ action is
defined as those pairs of points with orbits whose orbits are
asymptotically converging.

A $\T_X$-\emph{holonomy} is a Borel isomorphism $\varphi:A \to B$
with $A,B \subset X$ Borel sets, such that $(x,\varphi(x)) \in \T_X$
for every $x \in A$.


In the rest of this paper $X$ will be a either a one-dimensional or
$d$-dimensional \emph{subshift} and $T:X \to X$ will denote the
shift map or shift action of $\ZD$ respectively.

$\T_X$ is a countable, standard Borel equivalence relation in the
sense of Feldman and Moore \cite{feldman_moore_77_i}. It follows
that there exists a countable group $\Gamma$ of Borel automorphisms
of $X$ which generate $\T$, meaning  $\Gamma x = \T(x)$ for all $x
\in X$. In general, such $\Gamma$ can not be chosen as a group of
homeomorphisms of $X$.


Let
$$\mathcal{F}(X)= \{ \varphi \in \Homeo(X):~ \exists N >0\mbox{ s.t. } \forall |k|> N\, \forall x \in X~\varphi(x)_k = x_k\}$$
The set $\mathcal{F}(X)$ is a countable group of Homeomorphisms. In
Krieger's terminology from \cite{kreiger_dimension_1980},
$\mathcal{F}(X)$ is the group of ``uniformly finite-dimensional
bijections''.

Here is a convenient countable set of generators for this group:
\begin{lem}\label{lem:swap_inv_generate}
$\mathcal{F}(X)$ is generated by involutions of the form
$$\xi_{a,b}(x)_k=\left\{ \begin{array}{cc}
a_k & \mbox{if }k\in B_n \mbox{ and } x\in [b_k]_{B_n}\\
b_k & \mbox{if }k\in B_n \mbox{ and } x\in [a_k]_{B_n}\\
x_k & \mbox{otherwise}    \end{array} \right. ,$$ where $a,b \in
\Sigma^{B_n}$ for some $n \in \NN$ are such that $\xi_{a,b}(x) \in
X$ for every $x \in X$.
\end{lem}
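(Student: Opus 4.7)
The plan is to reduce any $\varphi \in \mathcal{F}(X)$ to a finite permutation of the $B_M$-patterns occurring in $X$ (for a suitable box $B_M$), and then decompose that permutation into transpositions which lift to swap involutions of the required form $\xi_{a,b}$.

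First, fix $\varphi \in \mathcal{F}(X)$ and let $N$ be as in the definition of $\mathcal{F}(X)$, so that $\varphi(x)_k = x_k$ whenever $|k| > N$. By continuity of $\varphi$ on the compact, totally-disconnected space $X$, the central coordinates $\varphi(x)_k$ for $|k|\le N$ are determined by $x|_{B_M}$ for some $M \ge N$. Enlarging $M$ if necessary so that the same holds for $\varphi^{-1}$, the assignment $\Phi(x|_{B_M}) := \varphi(x)|_{B_M}$ is a well-defined bijection of the (finite) set of $B_M$-patterns occurring in $X$; since $\varphi$ fixes all coordinates in $B_M \setminus B_N$, $\Phi$ agrees with the identity on those coordinates.

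The key observation is that $\Phi$ respects the following \emph{exchange equivalence} $\sim$ on the $B_M$-patterns: $a \sim b$ iff $a$ and $b$ admit the same set of extensions to legal points of $X$. Indeed, for any $x \in X$ with $x|_{B_M} = a$, the point $\varphi(x)$ lies in $X$, coincides with $x$ outside $B_M$, and carries $\Phi(a)$ on $B_M$; so every external context compatible with $a$ is compatible with $\Phi(a)$. The reverse inclusion follows from the same argument for $\varphi^{-1}$, giving $a \sim \Phi(a)$. Hence $\Phi$ permutes each $\sim$-class, and being a permutation of a finite set, factors within each class as a product of transpositions $(a_i, b_i)$ with $a_i \sim b_i$. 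Each such transposition lifts with window $B_M$ to the involution $\xi_{a_i, b_i}$, and the required condition $\xi_{a_i,b_i}(X) \subseteq X$ is exactly the equivalence $a_i \sim b_i$. Composing the $\xi_{a_i,b_i}$ in the order matching the factorization of $\Phi$ yields an element of $\mathcal{F}(X)$ that agrees with $\varphi$ on every $B_M$-pattern and fixes every coordinate outside $B_N$, hence coincides with $\varphi$.

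The principal difficulty is isolating the right equivalence relation $\sim$ and verifying that $\Phi$ preserves it; this relies crucially on the fact that $\varphi$ leaves coordinates outside $B_N$ untouched, so that the external context of $\varphi(x)$ matches that of $x$ verbatim. Once compatibility of $\Phi$ with $\sim$ is established, the remaining combinatorics is standard cycle decomposition in the symmetric group of each $\sim$-class.
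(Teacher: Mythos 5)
Your proof is correct, and at the top level it follows the same route as the paper: restrict to the subgroup $\mathcal{F}_N(X)$ of maps supported in a fixed window, observe that $\varphi$ induces a permutation $\Phi$ of the legal patterns on a (possibly larger) window $B_M$, and decompose $\Phi$ into transpositions that lift to involutions $\xi_{a,b}$. The difference is in how the decomposition is justified. The paper simply maps $\mathcal{F}_N(X)$ injectively into the permutations of $\Sigma^{B_N}$ and invokes the statement that ``any group of permutations on a finite set is generated by swap involutions''; read literally this is insufficient, since a subgroup of a symmetric group need not be generated by the transpositions it contains, and one must moreover check that the transpositions used correspond to \emph{legal} involutions, i.e.\ ones with $\xi_{a,b}(X)\subseteq X$. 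Your exchange equivalence $\sim$ is exactly the missing ingredient: the observation $a\sim\Phi(a)$ (using that $\varphi$ and $\varphi^{-1}$ leave the external context verbatim) shows that $\Phi$ preserves each $\sim$-class, the cycle decomposition within a class uses only transpositions of $\sim$-equivalent patterns, and legality of $\xi_{a,b}$ is precisely $a\sim b$. So your write-up supplies a justification the paper leaves implicit, and is if anything more complete. Two small points worth making explicit: well-definedness and bijectivity of $\Phi$ require $M$ to be chosen large enough to work for $\varphi^{-1}$ as well (which you do note), and the transpositions coming from distinct $\sim$-classes have disjoint supports and commute, so the ordering of the composition only matters within each class.
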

\begin{proof}
Choose $\varphi \in \mathcal{F}(X)$. By continuity of $\varphi$ and
compactness of $X$, it follows that there exists $N \in \NN$ such
that $\varphi(x)_{[-N,N]}$ depends only on $x_{[-N,N]}$.
  Fix $N \in \NN$. Denote by
  $\mathcal{F}_N(X)$ the subgroup of $\mathcal{F}(X)$ for which the
  above holds with this given $N$. We have $\mathcal{F}(X) = \bigcup_{N \in \NN}\mathcal{F}_N(X)$.

We can define an injective group homomorphism $\alpha$ from
$\mathcal{F}_N(X)$ into the group of permutations of the finite set
$\Sigma^{B_N}$ by considering the action of $\varphi$  on the
coordinates inside $B_N$.  Any group of permutations on a finite set
is generated by ``swap'' involutions. Applying $\alpha^{-1}$ on such
a ``swap'' involution, we obtain an  involution of the form
$\xi_{a,b}$.
\end{proof}

 For every $a,b,c \in \Sigma^{B_n}$,
$\xi_{a,b}\xi_{b,c}=\xi_{a,c}$  and the right-hand-side is defined
whenever the left-hand-side is defined.

The group $\mathcal{F}(X)$ generates a sub-relation of $\T_X$. We
denote by $\T^0_X$ the orbit relation of $\mathcal{F}(X)$, and refer
to it as the \emph{topological Gibbs-relation}.

Call a point $x \in X$ $\T$-\emph{regular} if for any $x' \in X$,
$(x,x') \in \T_X$ implies $(x,x') \in \T^0_X$. A $x \in X$ point
which is not $\T$-regular, is called a \emph{$\T$-singularity}.

It is a simple observation that in case $X$ is a subshift of finite
type, $\T^0_X=\T_X$. In sections \ref{sec:kalikow}-\ref{sec:dyck} we
will see some examples where this is not the case.

\subsection{Functions with $d$-summable variation}
For $n=(n_1,\ldots,n_d) \in \ZD$ we denote
 $\|n\|=\max_{1\le i\le d}n_i$. For $f:X \to
\mathbb{R}$, let
$$v_k(f) := \sup\{|f(x)-f(y)|:~x,y \in X,\, x_{n}=y_{n}, ~\forall\, \|n\|\le k\},$$
and $v_0(f)=\|f\|_{\infty}$. The $d$-sum-of-variations norm of $f:X
\to \mathbb{R}$ is defined by:
$$\|f\|_{\SVD}:= \sum_{k=1}^{\infty}k^{d-1} v_{k-1}(f)$$
If $\|f\|_{\SVD} < \infty$ we say that $f$ has \emph{$d$-summable
variation}, as in \cite{schmidt_super_k}. Denote by $\SVD(X)$ the
collection of real-valued function on $X$ with $d$-summable
variation. $\SVD(X)$ is a separable Banach-space with respect to the
norm $\|\cdot \|_{\SVD}$.

A function $f \in \SVD(X)$ defines a $\T_X$-cocycle $\phi_f:\T_X \to
\mathbb{R}_+$  by:
$$\phi_f(x,y)=\exp\left(\sum_{n \in \ZD}f(T^nx)-f(T^ny)\right)$$

For $(x,y) \in \T_X$ there exist $k>0$ such that $x_n=b_n$ for $n
\in \ZD$ with $\|n\|>k$. It follows that $|f(T^nx)-f(T^ny)| \le
v_{\|n\|-k}$. Since the number of there are order of $j^{d-1}$
points in $\ZZ^d$ with norm $j$, it follows that $\phi_f(x,y)$ is
well-defined whenever $f \in \SVD(X)$ and $(x,y) \in \T_X$.

\subsection{Conformal and Gibbs measures}
In the following we recall some terminology on conformal measures.
For further details and references see
\cite{aaro-nakada-2007,feldman_moore_77_i,petersen_schmidt97}.

Let $\mathcal{R} \subset X \times X$ be a Borel-equivalence relation
on $X$. A measure $\mu \in \mathcal{P}(X)$ is
$\mathcal{R}$-nonsingular if $\mu(A)=0$ implies
$\mu(\mathcal{R}(A))=0$ for any Borel set $A \subset X$.

 If $\mu$ is $\T_X$-nonsingular the \emph{Radon-Nikodym cocycle} of $\mu$ with
respect to $\T$ is a measurable map $D_{\mu,\T}:\T \to \mathbb{R}_+$
satisfying $\frac{d\mu \circ f}{d\mu}(x)=D(x,f(x))$ for any
$\T_X$-holonomy $f$. $D_{\mu,\T}$ is uniquely defined up to a
$\mu$-null set.

For a measurable cocycle $\phi:\mathcal{R} \to \mathbb{R}_+$, a
measure $\mu\in \mathcal{P}(X)$ is
\emph{$(\phi,\mathcal{R})$-conformal} if it is
$\mathcal{R}$-nonsingular with $\log D_{\mu,\mathcal{R}} = \phi$.
Observe that for any countable group $\Gamma \subset
\mathit{Aut}(X)$ which generates $\mathcal{R}$, a measure $\mu$ is
$(\phi,\mathcal{R})$-conformal iff $\frac{d\mu \circ
\gamma}{d\mu}(x)=\exp\left(\phi(x,\gamma x)\right)$ on a set of full
$\mu$-measure, for any $g \in \Gamma$.

Call a measure $\mu\in \mathcal{P}(X)$ a \emph{Gibbs measure of $f
\in \SVD(X)$} if it is $(\phi_f,\T_X)$-conformal.

\begin{lem}\label{lem:svd_cocycle_converge}
  If $f,g \in \SVD$ and $x,y \in X$ and $x_{B_r^c}=y_{B_r^c}$, then $$|\phi_f(x,y)- \phi_g(x,y)| \le
  C_r\|f-g\|_{\SVD},$$
  with $C_r = 100\cdot(2r)^{d}$.
\end{lem}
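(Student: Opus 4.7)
Set $h:=f-g$. Since $\log\phi_f(x,y)-\log\phi_g(x,y)=S_h(x,y)$ where
$$S_h(x,y):=\sum_{n\in\ZD}\bigl[h(T^nx)-h(T^ny)\bigr],$$
the lemma reduces to the additive estimate $|S_h(x,y)|\le C_r\|h\|_{\SVD}$ (if one reads the statement literally, the mean-value inequality $|e^A-e^B|\le e^{\max(A,B)}|A-B|$ translates the additive bound into the multiplicative one). Everything substantive is contained in this additive estimate, and the entire argument is then bookkeeping guided by one geometric observation.

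The geometric observation is that the hypothesis $x_{B_r^c}=y_{B_r^c}$ forces $T^nx$ and $T^ny$ to agree at every coordinate $k$ with $\|k+n\|>r$. When $\|n\|>r$, the reverse triangle inequality $\|k+n\|\ge \|n\|-\|k\|$ shows they agree on the whole box $B_{\|n\|-r-1}$, and hence $|h(T^nx)-h(T^ny)|\le v_{\|n\|-r-1}(h)$; when $\|n\|\le r$ I only have the crude bound $|h(T^nx)-h(T^ny)|\le 2v_0(h)$. The ``near'' block $\{\|n\|\le r\}$ therefore contributes at most $2(2r+1)^d v_0(h)\le 2(2r+1)^d\|h\|_{\SVD}$, using that $v_0(h)$ appears as the $k=1$ term of $\|h\|_{\SVD}$.

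For the ``far'' block I partition by $j=\|n\|>r$, use the shell count $\#\{n:\|n\|=j\}\le 2d(2j+1)^{d-1}$ (from a one-line mean-value computation on $x\mapsto x^d$), and substitute $j=r+m$ with $m\ge 1$. The elementary separation $(r+m)^{d-1}\le 2^{d-1}(r^{d-1}+m^{d-1})$ then splits the remaining double sum into two manageable pieces: one bounded by a constant times $r^{d-1}\sum_{m\ge 1}v_{m-1}(h)\le r^{d-1}\|h\|_{\SVD}$, and another bounded by a constant times $\sum_{m\ge 1}m^{d-1}v_{m-1}(h)=\|h\|_{\SVD}$. Adding the near and far contributions and absorbing the numerical factors yields a bound of shape $C\cdot r^d\|h\|_{\SVD}$, and the only remaining task is a numerical check that $C<100\cdot 2^d$, matching $C_r=100(2r)^d$. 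The one mildly awkward step in the whole proof is this decoupling of $r$ and $m$ inside $(r+m)^{d-1}$ so that the double sum over shells can be reabsorbed into $\|h\|_{\SVD}$; everything else is straightforward counting.
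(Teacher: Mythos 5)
Your proof is correct and follows essentially the same route as the paper's: reduce to $h=f-g$, split the sum over $\ZD$ into the box $B_r$ (bounded by $2\|h\|_\infty|B_r|$, with $v_0(h)=\|h\|_\infty$ absorbed as the first term of the norm) and the outer shells (bounded by shell counts times $v_{\|n\|-r-1}(h)$), then reabsorb everything into $\|h\|_{\SVD}$. You are in fact slightly more careful than the paper at two points --- the additive-versus-exponential reading of $\phi_f$ (the paper silently works with the additive cocycle), and the decoupling $(r+m)^{d-1}\le 2^{d-1}(r^{d-1}+m^{d-1})$ needed to compare the index-shifted shell sum with $\|h\|_{\SVD}$, which the paper skips --- although, exactly as in the paper, the specific constant $100\cdot(2r)^d$ does not actually survive the crude bound $|B_r|=(2r+1)^d$ for large $d$; this is immaterial since only the existence of some $C_r<\infty$ is used later.
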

\begin{proof}
  Observe that $\phi_f-\phi_g = \phi_{f-g}$, so assume without loss of generality that $g=0$.
  $$\phi_f(x,y) = \sum_{k} (f(T^kx)-f(T^ky))$$
  For $|k| \le r$, we have $|f(T^kx)-f(T^ky)| \le 2\|f\|_{\infty}$.
  Now if $x_{B_r^c}=y_{B_r^c}$, it follows that for $k \in \ZD$ with $|k| > r$,
  $$|f(T^kx)-f(T^ky)|< v_{|k|-r}(f)$$
  Summing over all $k \in\ZD$, we get:
  $$\sum_{k \in Z}|f(T^kx)-f(T^ky)| =  \sum_{j=0}^{\infty}\sum_{|k| = j}  |f(T^kx)-f(T^ky)| \le$$
  $$ \le 2|B_r|\cdot \|f\| \sum_{j=r+1}^\infty N_{j} v_{j-r}(f)$$
  where $N_j$ is the number of $k$'s in $\ZD$ with $|k|=j$, and $B_r = \{ k \in \ZD : |k| \le r\}$. As $N_j\le \frac{1}{2}C_r j^{d-1}$,
  and $|B_r| \le \frac{1}{2}C_r$, this completes the proof.
\end{proof}

The following property makes $\SVD(X)$  a suitable Banach-space to
study $\T^0$-conformal measures:

\begin{prop}\label{prop:limit_of_gibbs}
  Suppose $\{f\}_n$ are sequence of functions in $\SVD(X)$ which converge to $f$ in norm,
  denote $\phi_n:=\phi_{f_n}$ and $\phi:=\phi_f$.
  If $\mu_n$ is $(\phi_{n},\T^0)$-conformal, and $\mu_n$ tends weakly to $\mu$, then $\mu$ is $(\phi,\T^0)$-conformal.
\end{prop}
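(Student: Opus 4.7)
The plan is to reduce $(\phi_f,\T^0)$-conformality to a countable collection of integral identities indexed by the swap-involutions $\xi_{a,b}$ furnished by Lemma~\ref{lem:swap_inv_generate}, and then pass to the limit $k\to\infty$ using the weak convergence $\mu_k\to\mu$ together with the uniform cocycle estimate of Lemma~\ref{lem:svd_cocycle_converge}.

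First I would unpack conformality in integrated form. Since $\mathcal{F}(X)$ generates $\T^0$, $\mu$ is $(\phi_f,\T^0)$-conformal iff for every $\xi\in \mathcal{F}(X)$ and every $\psi\in C(X)$,
$$\int_X \psi(\xi x)\, d\mu(x) \;=\; \int_X \psi(x)\exp\!\bigl(\phi_f(x,\xi x)\bigr)\, d\mu(x). \qquad (\ast)$$
By Lemma~\ref{lem:swap_inv_generate} it suffices to check $(\ast)$ when $\xi = \xi_{a,b}$ with $a,b\in \Sigma^{B_n}$. Such a $\xi$ is a homeomorphism of $X$ with a uniform ``support'': $x$ and $\xi(x)$ agree on $B_n^c$ for \emph{every} $x\in X$, independent of $x$.

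Now fix such a $\xi$. By hypothesis, $(\ast)$ holds for $(\mu_k,\phi_{f_k})$. The left-hand side passes to the limit immediately, since $\psi\circ\xi\in C(X)$ and $\mu_k\to\mu$ weakly. For the right-hand side, Lemma~\ref{lem:svd_cocycle_converge} applied with the fixed support $B_n$ gives
$$\sup_{x\in X}\bigl|\phi_{f_k}(x,\xi x) - \phi_f(x,\xi x)\bigr| \;\le\; C_n\|f_k - f\|_{\SVD} \;\longrightarrow\; 0.$$
Thus $x\mapsto \phi_f(x,\xi x)$ is the uniform limit of the bounded continuous maps $x\mapsto \phi_{f_k}(x,\xi x)$ (continuous as uniformly convergent series of the continuous functions $f_k(T^m x)-f_k(T^m \xi x)$ indexed by $m\in\ZD$), hence itself bounded and continuous. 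Multiplying by $\psi$ and exponentiating preserves uniform convergence of bounded continuous functions, so combined with $\mu_k\to\mu$ weakly,
$$\int_X \psi(x)\,e^{\phi_{f_k}(x,\xi x)}\, d\mu_k(x) \;\longrightarrow\; \int_X \psi(x)\, e^{\phi_f(x,\xi x)}\, d\mu(x).$$
This yields $(\ast)$ for $\mu$, $\phi_f$ and every generator $\xi_{a,b}$, so $\mu$ is $(\phi_f,\T^0)$-conformal.

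The one non-routine ingredient --- and the main obstacle to a naive argument --- is that $\phi_{f_k}(x,\xi x)\to \phi_f(x,\xi x)$ must converge \emph{uniformly} in $x$, since pointwise convergence alone would not survive the weak limit. This is precisely the point at which the reduction to the finite-range generators $\xi_{a,b}$ is essential: the ``support'' $B_n$ on which $\xi$ acts nontrivially does not depend on $x$, which is exactly the hypothesis of Lemma~\ref{lem:svd_cocycle_converge}. Without the generation result in Lemma~\ref{lem:swap_inv_generate} reducing everything to such uniformly-supported involutions, one would only have $x$-dependent radii and the uniform cocycle estimate would fail.
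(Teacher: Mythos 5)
Your proposal is correct and takes essentially the same route as the paper's proof: reduce conformality to the integral identity over the generators $\xi_{a,b}$ of Lemma~\ref{lem:swap_inv_generate}, pass the left-hand side through weak convergence, and use Lemma~\ref{lem:svd_cocycle_converge} to get uniform convergence of the cocycles on the right-hand side. If anything you are slightly more careful than the paper in spelling out that the limit cocycle $x\mapsto\phi_f(x,\xi_{a,b}(x))$ is itself continuous, which is indeed needed to combine the uniform convergence with the weak convergence $\mu_n\to\mu$.
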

\begin{proof}
The statement will follow once we show that
$$\int_{[a]}f(x) d\mu(x) = \int_{[b]}f(x)\phi(x,\xi_{a,b}(x))d\mu(x)$$
whenever $\xi_{a,b}$ and $\xi_{b,a}$ are defined and every
continuous $f:X \to \mathbb{R}$.

The assumption that $\mu_n$ is $(\mathcal{F},\phi_{n})$-conformal
implies that
$$\int_{[a]}f(x) d\mu_n(x) = \int_{[b]}f(x)\phi_n(x,\xi_{a,b}(x))d\mu_n(x)$$
whenever $a,b$ and $f$ are as above. Since $\mu_n$ tend weakly to
$\mu$,
$$\lim_{n \to \infty}\int_{[a]}f(x) d\mu_n(x) =\int_{[a]}f(x) d\mu(x).$$
By lemma \ref{lem:svd_cocycle_converge}, since $\lim_{n \to
\infty}\|f_n-f\|_{\SVD}=0$, the functions
$g_n(x):=f(x)\phi_n(x,\xi_{a,b}(x))$ converge uniformly on $[b]$ to
$g(x):=f(x)\phi(x,\xi_{a,b}(x))$. Thus,
$$\lim_{n \to \infty}f(x)\int_{[b]}\phi_n(x,\xi_{a,b}(x))d\mu_n(x) =\int_{[b]}f(x)\phi(x,\xi_{a,b}(x))d\mu(x)$$
\end{proof}

\subsection{Pressure and Equilibrium}
Suppose $\phi:X \to \RR$ is a continuous function (regraded as a
``potential'' on $X$), $\mathcal{U}$ a finite open cover of $X$, and
$F \subset \ZD$ is a finite set. Define a \emph{partition function}:
\begin{equation}
Z_F(\phi,\mathcal{U})= \min\{ \sum_{u \in
\mathcal{U}'}\exp\left[\sup_{x \in u} \sum_{n \in
F}\phi(T_nx)\right] :~ \mathcal{U}' ~\mbox{is a subcover of}~
\mathcal{U}^F\}
\end{equation}

The topological pressure of $\phi$ with respect to $\mathcal{U}$ is
defined as:
$$ P(\phi,\mathcal{U})= \lim_{n \to \infty}|F_n|^{-1}\log
Z_{F_n}(\phi,\mathcal{U}),$$ where $F_n=[1,n]^d$.

 The topological pressure of
$\phi$ is obtained by taking supremum over all finite open covers:
$$P(\phi)= \sup_{\mathcal{U}}P(\phi,\mathcal{U})$$

Concretely,

$$P(\phi)= \lim_{n \to \infty}|F_n|^{-1}\log \sum_{a \in
L_n(X)}\exp\left[\sup_{x \in [a]} \sum_{n \in F_n}\phi(T_n x)
\right]$$

For an invariant measure $\mu \in \mathcal{P}(X,T)$, the
\emph{measure theoretic pressure} is defined by: $$P_\mu(\phi)=
h_\mu(X,T)- \int \phi(x) d\mu(x)$$ where $h_{\mu}(X,T)$ is the
Kolmogorov (measure-theoretic) entropy of  $(X,\mu,T)$.

The \emph{variational principal} is a theorem which relates
measure-theoretic pressure with the topological one:

\begin{thm*}\textbf{(The variational principal for pressure)}
For any continuous function $f:X \to \mathbb{R}$,
  $$\sup_{\mu \in \mathcal{P}(X,T)}P_{\mu}(f) = P(f)$$
\end{thm*}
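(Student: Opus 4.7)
The plan is to establish the two inequalities $\sup_\mu P_\mu(f) \leq P(f)$ and $\sup_\mu P_\mu(f) \geq P(f)$ separately, following the classical approach of Misiurewicz. Throughout I use the F\o lner sequence $F_n=[1,n]^d$ and write $S_n f = \sum_{k \in F_n} f \circ T_k$.

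For the upper bound, fix $\mu \in \mathcal{P}(X,T)$ and a finite open cover $\mathcal{U}$. Pick any finite Borel partition $\alpha$ each of whose atoms is contained in a single member of $\mathcal{U}$, and let $\mathcal{U}' \subset \mathcal{U}^{F_n}$ be a subcover realising $Z_{F_n}(f,\mathcal{U})$. Each atom $A$ of $\alpha^{F_n}$ lies in some $u(A) \in \mathcal{U}'$; the elementary inequality
\begin{equation*}
\sum_i p_i (a_i - \log p_i) \leq \log \sum_i e^{a_i},
\end{equation*}
applied with $p_A = \mu(A)$ and $a_A = \sup_{x \in u(A)} S_n f(x)$, gives $H_\mu(\alpha^{F_n}) + \int S_n f\, d\mu \leq \log Z_{F_n}(f,\mathcal{U})$. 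Dividing by $|F_n|$ and letting $n \to \infty$ yields $h_\mu(\alpha,T) + \int f\, d\mu \leq P(f,\mathcal{U})$; supremising over refinements $\alpha$ and over covers $\mathcal{U}$ completes this direction.

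For the lower bound I build invariant measures nearly attaining the topological pressure. For each $n$, choose a minimal subcover $\mathcal{U}'_n$ of $\mathcal{U}^{F_n}$ realising $Z_n := Z_{F_n}(f,\mathcal{U})$, pick $x_u \in u$ with $S_n f(x_u)$ close to $\sup_u S_n f$ for each $u \in \mathcal{U}'_n$, and set
\begin{equation*}
\sigma_n = \frac{1}{Z_n} \sum_{u \in \mathcal{U}'_n} e^{S_n f(x_u)}\, \delta_{x_u}, \qquad \mu_n = \frac{1}{|F_n|} \sum_{k \in F_n} (T_k)_\ast \sigma_n.
\end{equation*}
Extract a weak-$\ast$ accumulation point $\mu$ of $(\mu_n)$; standard averaging shows $\mu \in \mathcal{P}(X,T)$. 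A direct computation with any partition $\alpha$ whose atoms refine $\mathcal{U}$ and separate the points $x_u$ gives $H_{\sigma_n}(\alpha^{F_n}) + \int S_n f\, d\sigma_n = \log Z_n$. A subadditivity argument, decomposing $F_n$ into translates of a smaller box $F_q$ and using concavity of entropy, converts this into $|F_q|^{-1} H_{\mu_n}(\alpha^{F_q}) + \int f\, d\mu_n \geq |F_n|^{-1}\log Z_n - o(1)$. Passing to the weak-$\ast$ limit along a subsequence, via continuity of $\mu \mapsto \int f\, d\mu$ and upper semicontinuity of $H_\mu(\alpha^{F_q})$ on a partition $\alpha$ with $\mu$-null atomic boundaries, and then letting $q \to \infty$, yields $h_\mu(\alpha,T) + \int f\, d\mu \geq P(f,\mathcal{U})$, whence the desired inequality after supremising over $\mathcal{U}$.

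The main technical obstacle is this transfer step in the lower bound: one must convert an entropy estimate for the highly singular atomic measure $\sigma_n$ against the fine partition $\alpha^{F_n}$ into an entropy estimate for the averaged measure $\mu_n$ against the coarser partition $\alpha^{F_q}$, and the boundary terms produced when tiling $F_n$ by translates of $F_q$ must be of order $|F_n|^{-1}|\partial_q F_n| = o(1)$ as $n \to \infty$; this is exactly the F\o lner property of $(F_n)$ in $\ZD$ and is where the choice $F_n=[1,n]^d$ enters decisively. One must additionally select the partition $\alpha$ with $\mu$-null atomic boundaries so that $H_{\mu_n}(\alpha^{F_q}) \to H_\mu(\alpha^{F_q})$ along the convergent subsequence, which is always possible by a standard approximation but should be invoked explicitly.
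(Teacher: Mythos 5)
The paper itself does not prove this statement --- it only cites \cite{walters_var} --- so your outline must stand on its own against the classical Misiurewicz argument, which is indeed the route you take. The architecture is right, but one step of your upper bound fails as written: you assert that each atom $A$ of $\alpha^{F_n}$ lies in some member $u(A)$ of the subcover $\mathcal{U}'$ realising $Z_{F_n}(f,\mathcal{U})$. If each atom of $\alpha$ sits inside a member of $\mathcal{U}$, then each atom of $\alpha^{F_n}$ sits inside a member of the full product cover $\mathcal{U}^{F_n}$; but the \emph{minimizing} subcover $\mathcal{U}'$ need not contain that member, and an atom can meet several elements of $\mathcal{U}'$ without being contained in any single one. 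You cannot fix this by passing to the subcover induced by your assignment, because $Z_{F_n}$ is a \emph{minimum} over subcovers, so that substitution bounds $Z_{F_n}$ from below --- the wrong direction. The standard repair assigns to $A$ an element of $\mathcal{U}'$ that it merely meets and whose supremum of $S_nf$ dominates $\sup_A S_nf$, then controls the multiplicity of this assignment; that multiplicity is of order $2^{|F_n|}$, contributing an additive $\log 2$ per site which must afterwards be removed by applying the inequality to the rescaled action (blocks $F_N$ as single time steps, potential $S_Nf$) and letting $N\to\infty$. Your sketch contains neither the multiplicity count nor the rescaling step, and without them the inequality $P_\mu(f)\le P(f)$ is not established.

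Since the $X$ of this paper is a subshift, there is a cleaner path that avoids the difficulty entirely and that you should have exploited: take $\mathcal{U}$ to be the clopen partition into zero-coordinate cylinders. Then $\mathcal{U}^{F_n}$ is itself a partition into cylinders $[a]$, $a\in L_n(X)$, whose only subcover is the collection of its nonempty members --- this recovers exactly the paper's ``concrete'' formula for $P(\phi)$, and the atom-versus-subcover mismatch disappears because partition and cover coincide. The same choice trivializes the delicate points you flag in the lower bound: upper semicontinuity of $\mu\mapsto H_\mu(\alpha^{F_q})$ is automatic for clopen atoms, so no null-boundary selection is required. What remains essential, and what you correctly identify, is the F\o lner/tiling estimate converting the entropy identity for $\sigma_n$ on $\alpha^{F_n}$ into one for $\mu_n$ on $\alpha^{F_q}$ with an $o(1)$ boundary error; that part of your plan is sound for $F_n=[1,n]^d$ in $\ZD$.
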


See \cite{walters_var} for a proof.

A measure $\mu \in \mathcal{P}(X,T)$ is called an \emph{equilibrium
state} for $f$ if $P_\mu(f)=P(f)$. Whenever $T$ is expansive and $f$
continuous, the existence of an equilibrium state is assured.

If $f,g: X \to \mathbb{R}$ are continuous functions such that $h:=
f- g$ has integral zero with respect to any measure in
$\mathcal{P}(X,T)$, then the sets of equilibrium measures of $f$ and
$g$ coincide. In particular this is the case if $f$ and $g$ are
cohomologous.

\section{\label{sec:dobru} Dobru{\v{s}}in's theorem and the Lanford-Ruelle theorem}

For a measure $\mu$ to be a Gibbs measure is a ``local property'':
it imposes a condition on the Radon-Nikodym cocyle for pairs of
points $(x,y) \in \T$. On the other hand, for $\mu$ to be
equilibrium measure is a ``global property'': An equilibrium $\mu$
must maximize the pressure, which is a global quantity.

The following we state theorems by Lanford and Ruelle and
Dobru{\v{s}}in specify a framework within which these global and
local notions coincide:

In order to state  Dobru{\v{s}}in's theorem we introduce the
following definition:  A subshift $X \subset \ZD$ of  satisfies
\emph{condition $(D)$} if there exist increasing sequences of
integers $\{n_k\}_{k=1}^{\infty}$ and $\{m_k\}_{k=1}^{\infty}$ with
$n_k < m_k$ and $\lim_{k \to \infty}\frac{m_k}{n_k}=1$ such that for
any $x,y \in X$ and $k \in \NN$ there exist $z \in X$ with
$z\mid_{F_{n_k}}=x\mid_{F_{n_k}}$ and $z \mid_{\ZD \setminus
F_{m_k}}=y\mid_{\ZD \setminus F_{m_k}}$.

An SFT $X$ is \emph{strongly irreducible} if it satisfies condition
$(D)$ with $m_k=n_k+L$ for some integer $L$. In the case $d=1$, an
SFT satisfies  condition $(D)$ iff it is strongly irreducible, iff
it is mixing.

Dobru{\v{s}}in's theorem states the following:
\begin{thm*}|\label{thm:dobrusin}(\textbf{Dobru{\v{s}}in \cite{dobrusin_gibbs}})
Let $X \subset S^\ZD$ be a subshift which satisfies condition $(D)$,
and $f:X \to \mathbb{R}$ a function with summable variation. Then
any translation invariant Gibbs state is an equilibrium for $f$.
\end{thm*}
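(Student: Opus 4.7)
The proof follows the classical Dobrushin strategy: use the $\T_X$-conformality of $\mu$ to write the conditional distribution of $x|_{F_n}$ given the coordinates of $x$ outside $F_n$ as a finite-volume Gibbs formula, and then exploit condition $(D)$ together with the control afforded by $d$-summable variation to compare the resulting local partition function to the one that defines $P(f)$. By the variational principle, for every shift-invariant $\mu$ we have $P_\mu(f)\le P(f)$, so it suffices to prove $P_\mu(f)\ge P(f)$, i.e.\ $h_\mu+\int f\,d\mu\ge P(f)$.

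Since $\mu$ is $(\phi_f,\T_X)$-conformal and $\phi_f$ is well defined on $\T_X$ for $f\in\SVD(X)$, for every $n$ and $\mu$-a.e.\ $x$ the conditional probability of the cylinder $[a]_{F_n}$ given the exterior of $x$ equals
$$\mu\bigl([a]_{F_n}\bigm| x|_{F_n^c}\bigr)=\frac{\exp(\Psi_n(a;x))}{\tilde Z_n(x)},$$
summed in the normalizer $\tilde Z_n(x)$ over admissible $a\in L_n(X)$ compatible with $x|_{F_n^c}$. Here $\Psi_n(a;x)$ is the value of $\phi_f$ between a point $y^{a,x}\in X$ realizing $a$ with the exterior of $x$, and a chosen reference with the same exterior. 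By the same boundary-layer estimate used in Lemma~\ref{lem:svd_cocycle_converge}, $\Psi_n(a;x)=\sum_{k\in F_n}f(T^ky^{a,x})+r_n(a,x)+C_n(x)$ uniformly in $a$ with $r_n=o(|F_n|)$, where $C_n(x)$ does not depend on $a$ and hence cancels in the conditional probability.

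Take $n=n_k$ and $m=m_k$ from condition $(D)$. For $\mu$-a.e.\ $x$ and every $b\in L_{F_n}(X)$, condition $(D)$ produces a completion of $b\cup x|_{F_m^c}$ to an element of $X$, via some pattern on the buffer $F_m\setminus F_n$. Using such a completion in $\tilde Z_m(x)$, estimating its contribution on $F_n$ against $\sup_{z\in[b]}\sum_{k\in F_n}f(T^kz)$ via the variation, and bounding the buffer contribution trivially by $|F_m\setminus F_n|\cdot\|f\|_\infty$, yields
$$\tilde Z_m(x)\;\ge\;Z_{F_n}(f)\cdot\exp\bigl(-|F_m\setminus F_n|\cdot\|f\|_\infty-o(|F_m|)\bigr).$$
Since $m_k/n_k\to 1$, one has $|F_n|/|F_m|\to 1$ and $|F_m\setminus F_n|/|F_m|\to 0$, so $\frac{1}{|F_m|}\int\log\tilde Z_m\,d\mu\ge P(f)+o(1)$. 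Taking $-\log$ of the local Gibbs formula, integrating against $\mu$, and using shift-invariance gives
$$H_\mu\bigl(x|_{F_m}\bigm| x|_{F_m^c}\bigr)=\int\log\tilde Z_m\,d\mu-|F_m|\int f\,d\mu+o(|F_m|).$$
Combining these two estimates, dividing by $|F_m|$, and using $H_\mu(x|_{F_m}\mid x|_{F_m^c})\le H_\mu(x|_{F_m})$ together with $\frac{1}{|F_m|}H_\mu(x|_{F_m})\to h_\mu$ yields $h_\mu+\int f\,d\mu\ge P(f)$, as required.

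The hard part will be the comparison step: showing $\tilde Z_m(x)\ge Z_{F_n}(f)\cdot e^{-o(|F_m|)}$ uniformly (or $\mu$-a.e.)\ in the exterior, with both the buffer fill-in cost $|F_m\setminus F_n|\cdot\|f\|_\infty$ and the boundary corrections arising from summable variation being $o(|F_m|)$. Condition $(D)$ with $m_k/n_k\to 1$ is exactly what makes the first of these negligible, while $d$-summable variation (as opposed to mere continuity) is what controls the second; these two hypotheses are not interchangeable, and it is their precise interplay that powers Dobrushin's theorem.
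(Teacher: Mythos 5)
The paper does not actually prove this theorem: it is stated as a classical result and attributed to Dobru\v{s}in's original work, so there is no in-paper argument to compare yours against. Judged on its own terms, your outline is the standard (and correct) Dobru\v{s}in strategy, and the logic closes: by the variational principle only the inequality $h_\mu+\int f\,d\mu\ge P(f)$ is at stake; the $\T_X$-conformality of $\mu$ yields the finite-volume conditional (DLR) formula because the fiber over a fixed exterior $x|_{F_m^c}$ is a finite set of mutually $\T_X$-equivalent points, so nonsingularity plus the prescribed Radon--Nikodym cocycle pins down the conditional weights; condition $(D)$ injects the globally admissible patterns $L_{n_k}(X)$ into the fillings compatible with $x|_{F_{m_k}^c}$, which is exactly what is needed since the partition function defining $P(f)$ is a sum over $L_n(X)$; and the two error terms you isolate (the buffer cost $|F_{m_k}\setminus F_{n_k}|\cdot\|f\|_\infty$ and the boundary-layer correction $\sum_{k\in F_n}v_{d(k,F_n^c)}(f)$) are both $o(|F_{m_k}|)$, the first by $m_k/n_k\to 1$ and the second by summable variation. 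Your closing remark correctly identifies why neither hypothesis can be dropped.

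Two small points to tidy up if you write this in full. First, be explicit that the additive normalization $C_m(x)$ must be chosen so that $\Psi_m(a;x)=\sum_{k\in F_m}f(T^k y^{a,x})+o(|F_m|)$ with no $x$-dependent constant left over; otherwise the identity $H_\mu(x|_{F_m}\mid x|_{F_m^c})=\int\log\tilde Z_m\,d\mu-|F_m|\int f\,d\mu+o(|F_m|)$ does not parse, since $\log\tilde Z_m$ and $\Psi_m$ each carry the constant and it only cancels in their difference. Second, note that the paper's displayed definition $P_\mu(\phi)=h_\mu-\int\phi\,d\mu$ has a sign slip relative to its own partition function; you are using the standard convention $P_\mu(f)=h_\mu+\int f\,d\mu$, which is the one consistent with the variational principle as stated, so keep it but flag the discrepancy.
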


On the other direction, there is the following theorem of Lanford
and Ruelle:
\begin{thm*}\label{thm:lanford_ruelle}(\textbf{Lanford-Ruelle \cite{lanford_ruelle}})
Let $X \subset \Sigma^{\ZD}$ be a subshift of finite type, and $f
\in \SVD(X)$. Then any equilibrium measure for $f$ is a  Gibbs
measure for $f$.
\end{thm*}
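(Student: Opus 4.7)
The plan is to argue by contradiction: suppose $\mu$ is an equilibrium state for $f$ that is not Gibbs, and produce a $T$-invariant measure $\tilde\mu$ with $P_{\tilde\mu}(f) > P_\mu(f)$. Since $X$ is an SFT we have $\T_X = \T^0_X$, so by Lemma~\ref{lem:swap_inv_generate} it suffices to test the Gibbs condition against the generators $\xi_{a,b}$, where $a,b \in \Sigma^{B_N}$ are such that $\xi_{a,b} \in \Homeo(X)$. The failure of the Gibbs property then means: there exist $N \in \NN$ and such a pair $(a,b)$ for which the Radon-Nikodym identity
$$\frac{d\mu \circ \xi_{a,b}}{d\mu}(x) \;=\; \phi_f\bigl(x, \xi_{a,b}(x)\bigr)$$
fails on a set of positive $\mu$-measure.

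Fix such an $N$, let $M \geq N$ be an auxiliary scale, and consider the disintegration of $\mu$ against the $\sigma$-algebra generated by the coordinates outside $B_M$. For $\mu$-a.e.\ exterior configuration $\omega$, the conditional $\mu_\omega$ is supported on the finite set $\Sigma^{B_M}(\omega)$ of inner words $u$ for which the concatenation $x^\omega_u$ (equal to $u$ on $B_M$ and $\omega$ off $B_M$) lies in $X$. Compare $\mu_\omega$ with the local Gibbs distribution
$$\nu_\omega(u) \;=\; Z(\omega)^{-1}\exp\bigl(\phi_f(x^\omega_*, x^\omega_u)\bigr), \qquad u \in \Sigma^{B_M}(\omega),$$
where $x^\omega_*$ is any fixed reference extension of $\omega$; the cocycle is well-defined because $f \in \SVD(X)$. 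Failure of the Radon-Nikodym identity for $\xi_{a,b}$ translates into $\mu_\omega \ne \nu_\omega$ on a set of $\omega$ of positive measure, and the classical Gibbs inequality (entropy-minus-energy is strictly maximized by the local Gibbs distribution among probability measures on the same finite set) then yields a strict gain in the local pressure when $\mu_\omega$ is replaced by $\nu_\omega$.

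The remaining step is to promote this local improvement to a global one. Choose a sparse periodic sublattice $L\ZD$ with $L \gg M$, replace $\mu_\omega$ by $\nu_\omega$ simultaneously on each translate $B_M + Lj$, and average the resulting (non-invariant) measure over shifts in a fundamental domain of $L\ZD$ to restore $T$-invariance; call the result $\tilde\mu$. Passing to a weak-$*$ limit as $M \to \infty$ (with $L$ chosen to grow suitably with $M$) and using the standard entropy-density formula gives $P_{\tilde\mu}(f) > P_\mu(f)$, contradicting the equilibrium property of $\mu$. The main obstacle is controlling the interaction between corrections in distant boxes, and this is where the $d$-summable variation hypothesis is essential: by Lemma~\ref{lem:svd_cocycle_converge}, $\phi_f$ is uniformly approximated by finite-range cocycles, so distant correction boxes decouple up to a boundary error that becomes negligible as $L \to \infty$. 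Executing this decoupling rigorously, so that the bulk entropy gain strictly dominates the boundary energy cost, is the technical heart of the argument.
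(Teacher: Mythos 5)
Your overall strategy --- condition on the exterior of a box, compare the conditional measure with the local Gibbs distribution via the finite variational inequality, and upgrade a strict local pressure gain to a strict global one --- is the classical Burton--Steif/Ruelle route, and it can be made to work for SFTs. But as written the proof has a genuine gap exactly at the step you yourself flag as ``the technical heart.'' The simultaneous replacement on the translates $B_M+Lj$ is not even well defined as stated, because $\nu_\omega$ for the box at $Lj$ depends on the \emph{entire} exterior configuration, including the other boxes being modified at the same time. The standard repair is to condition on the configuration outside the union of the boxes and replace the joint conditional law by a \emph{product} of local Gibbs laws computed relative to boundary conditions read off in annuli of width comparable to $L$, with Lemma \ref{lem:svd_cocycle_converge} controlling the resulting energy error; one then needs the entropy chain rule (conditional independence across disjoint boxes does not decrease entropy) to show the per-site pressure gain is at least $L^{-d}\delta$, where $\delta=\int D(\mu_\omega\,\|\,\nu_\omega)\,d\mu(\omega)>0$ is precisely the integrated local gain, while the boundary error per site is $o(1)\cdot L^{-d}$ as $L\to\infty$. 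None of this bookkeeping appears in your write-up, and without it the claim $P_{\tilde\mu}(f)>P_\mu(f)$ is unsupported. A smaller point: the weak-$*$ limit over $M\to\infty$ is unnecessary --- it suffices to work at the fixed scale $M=N$ where the Radon--Nikodym identity fails and take $L$ large --- and if you do take limits you must invoke upper semicontinuity of the entropy map for expansive systems to preserve the strict inequality.

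For comparison, the paper's argument (the proof of Theorem \ref{thm:local_lanford_ruelle}, which contains the SFT case since $\T^0_X=\T_X$ for subshifts of finite type) avoids the sparse-lattice surgery entirely: it first reduces to local potentials via the Lanford--Robinson theorem on tangent functionals together with Proposition \ref{prop:limit_of_gibbs}, then to site potentials by recoding, and then builds a single competing \emph{invariant} measure $\mu_0$ globally --- same projection to the factor $\overline{X}$, coordinates conditionally independent with weights proportional to $\exp f$ --- so that $P_{\mu_0}(f)\ge P_\mu(f)$ with equality iff $\mu$ already has the conformal property. That construction performs the decoupling once and for all and sidesteps the bulk-versus-boundary estimate that your approach still owes.
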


As explained in the introduction, we wish to check the validity of
these theorems for subshifts which are not of finite type. Without
any restrictions on the subshifts, both the above theorems fail to
generalize for various reasons. Here is a simple example of a
subshift which admits an equilibrium which is not Gibbs:

Let $\Sigma=\{0,1,2\}$. We define the subshift $X \subset
\Sigma^{\ZZ}$ by the condition that for any $x \in X$, $n \ge 2$ and
$k \in \ZZ$,
$$\#\{j \in [k,k+2^n]:~ x_j=0 \} \le n.$$

It follows that any for any translation invariant probability
measure $\mu \in \mathcal{P}(X)$, $\mu{[0]}=0$. Thus,
$\htop(X)=\log(2)$ and the unique measure of maximal entropy is the
symmetric Bernoulli measure on $\{1,2\}^{\ZZ}$, which we denote by
$\mu_0$. To see that $\mu_0$ is not $\T$-invariant, note that if $x
\in \{1,2\}^\ZZ \subset X$ ,then replacing a single coordinate with
$0$ leaves us with an admissible point in $X$. This defines a
$\T$-holonomy $g:\{1,2\}^\ZZ \to A$ where $A=\{x \in X:~ x_0=0,~
x_i\ne 0 \forall i\ne 0\}$. Since $\mu_0(A)=0$ and
$\mu_0(\{1,2\}^\ZZ)=1$, we see that $\mu_0$ is singular with respect
to $\T_X$. Thus, $\mu_0$ is an equilibrium for a constant function,
yet is not a Gibbs measure.

%

An attempt to find a statement which generalizes Lanford-Ruelle and
holds for an arbitrary subshift leads us to the following theorem:
\begin{thm}\label{thm:local_lanford_ruelle}
Let $X \subset \Sigma^{\ZD}$ be a subshift, and $f \in \SVD(X)$.
Then any equilibrium measure $\mu$ for $f$ is
$(\phi_f,\T^0)$-conformal.
\end{thm}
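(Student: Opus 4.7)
The strategy is to adapt the classical Lanford-Ruelle variational argument, exploiting the defining feature of $\T^0_X$: by Lemma \ref{lem:swap_inv_generate}, it is generated by swap involutions $\xi_{a,b}$ whose action is \emph{globally} defined on $X$. For such a pair $(a,b)$, one has $a\cdot w \in X$ if and only if $b\cdot w \in X$ for every $w\in\Sigma^{B_n^c}$, so swaps within $\T^0$-equivalence classes enjoy the same unconstrained local modifiability that the SFT case grants to every finite-box pattern.

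First I would reduce conformality to a check on these generators: it suffices to show, for each pair $(a,b)$ with the swap involution $\xi_{a,b}$ well-defined on $X$, that $\frac{d\mu\circ\xi_{a,b}}{d\mu}(x)=\exp\bigl(\phi_f(x,\xi_{a,b}(x))\bigr)$ $\mu$-a.e. on $[a]_{B_n}$. Disintegrating $\mu$ over the projection $\pi_{B_n^c}:X\to\Sigma^{B_n^c}$ and writing $\mu_{B_n}^w$ for the regular conditional on $\Sigma^{B_n}$, this identity becomes the pointwise Gibbs ratio
$$\mu_{B_n}^w(a) = \mu_{B_n}^w(b)\exp\bigl(\phi_f(a\cdot w, b\cdot w)\bigr)$$
for $(\pi_{B_n^c})_*\mu$-almost every $w$.

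Assume toward contradiction that the ratio fails on a set $W\subset\Sigma^{B_n^c}$ of positive measure, with the two sides differing multiplicatively by a factor $\ge 1+\epsilon$ on $W$. For a large spacing $L$ and cube $F_N$, fix a maximal collection of disjoint translates $B_n+\ell L\subset F_N$, and on each translate redistribute the conditional mass between the $a$- and $b$-cylinders toward the Gibbs ratio using the involution $\xi_{a,b}$. Globality of each swap keeps the resulting measure $\tilde{\mu}_N$ supported on $X$, and the elementary Gibbs variational principle in finite dimensions gives a strictly positive per-cell gain $\eta>0$ in the local contribution $H+\int S_{B_n}f$. Averaging $\tilde{\mu}_N$ over $\{T^j : j\in F_N\}$ yields a shift-invariant measure $\bar{\mu}_N$; passing to a weak limit $\bar{\mu}$ and invoking the upper semicontinuity of measure-theoretic pressure on expansive systems, $\bar{\mu}$ would then satisfy $P_{\bar{\mu}}(f)>P_\mu(f)=P(f)$, contradicting the variational principle.

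The main obstacle in the last step is the quantitative entropy-and-energy bookkeeping: one must show that the bulk gain from the $K$ swap cells survives both the shift averaging and the interaction between distant cells, the latter measured by the cocycle $\phi_f$ across cell boundaries. This is precisely where the $d$-summable variation hypothesis is used, via Lemma \ref{lem:svd_cocycle_converge}: it provides enough decay of $\phi_f$ across the boundary of $B_n+\ell L$ that, choosing $L$ sufficiently large, the boundary corrections per cell become negligible relative to $\eta$. This parallels the role of the same hypothesis in the classical Lanford-Ruelle proof.
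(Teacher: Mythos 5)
Your overall strategy --- reduce conformality to the swap generators $\xi_{a,b}$ of Lemma \ref{lem:swap_inv_generate}, then argue that a failure of the Gibbs ratio for the conditional measures would produce an invariant measure of strictly larger pressure --- is a legitimate route, and it is genuinely different from the paper's. The paper does not argue by contradiction via a local perturbation: after reducing to local (in fact site) potentials using the Lanford--Robinson tangent-functional theorem together with Proposition \ref{prop:limit_of_gibbs}, it builds a \emph{single, globally defined, shift-invariant} competitor $\mu_0$, by pushing $\mu$ through the factor map $\pi$ that collapses each symbol to its swap-equivalence class and then making the coordinates relatively independent over $\pi(x)$ with Gibbs weights. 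The chain rule $h_\mu = h_{\mu\circ\pi^{-1}} + h_\mu(X\mid\pi)$ then gives $P_{\mu_0}(f)\ge P_\mu(f)$ with equality iff $\mu=\mu_0$; no averaging, weak limits, or cell-to-cell interactions ever appear. The key structural fact it exploits is the same one you isolate (global definedness of the swaps), in the form $\pi^{-1}(y)=\{x:\ x_n\in y_n\ \forall n\}$, which guarantees the competitor is supported on $X$.

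The gap in your version sits exactly at the step you flag and then wave away, and it is not only about energy boundary terms. First, the construction of $\tilde\mu_N$ is not well posed as stated: the target ratio at the cell $B_n+\ell L$ is $\exp\bigl(\phi_f(a\cdot w, b\cdot w)\bigr)$ where $w$ runs over the \emph{complement}, which contains the other cells being modified, so ``redistributing toward the Gibbs ratio'' at all $K$ cells simultaneously is circular --- correcting one cell changes the target at the others. You must either condition on the exterior of the whole union of cells and show the per-cell gain survives the coarser conditioning, or correct sequentially and control the damage each step does to the previous ones; the $d$-summable-variation/large-$L$ remark controls only the energy cross terms via Lemma \ref{lem:svd_cocycle_converge}, not this conditioning issue. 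Second, $P_{\bar\mu}(f)>P(f)$ does not follow from upper semicontinuity alone: $\tilde\mu_N$ is not shift-invariant, so you need the Misiurewicz-type lemma relating the block quantity $|F_N|^{-1}H_{F_N}(\tilde\mu_N)$ to the Kolmogorov entropy of the weak limit of the shift averages $\bar\mu_N$, plus a check that the total gain $K\eta$ is of order $|F_N|$ (the cells have density $L^{-d}$) and is not consumed by the boundary of $F_N$. These two items are the actual content of the proof on your route; as written you have the (correct) plan but not the theorem.
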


\begin{proof}
The proof we bring here combines some elements form Burton and
Steif's proof of the corresponding theorem on measures of maximal
entropy for SFTs in \cite{burton_steif94}, and other ingredients
from the proof of the Lanford-Ruelle appearing in
\cite{ruelle_themo}.

The first step is a reduction of the theorem to ``local functions''
$f:X \to \mathbb{R}$ - this means that $f(x)$ depends only on $x
\mid_F$ for some set finite $F \subset \ZD$:

 The collection of local
functions $\mathit{Loc}(X)$, is dense in $\SVD(X)$. $\SVD(X)$ is a
separable Banach space, and the pressure function $P:\SVD(X) \to
\mathbb{R}$ is convex and continuous.

%
Assume we know that for any local function equilibrium measures are
$\T^0$-conformal.
A theorem of Lanford and Robinson from \cite{lanford_robinson_68},
states that for a continuous convex function on a separable Banach
space $X$ , and a dense set $X_0 \subset X$,  any tangent functional
at $x \in X$ is in the weak-closure of the convex hull of the set
$$\{\lim_{n\to \infty}y_n:~ y_n \mbox{ is a tangent at } x_n \in X_0,~ x_n \to x
\}$$
Thus, any equilibrium $\mu$ for $f \in \SVD(X)$ is a limit of
$\mu_n$ which are equilibrium for local $f_n$'s, such that
$\|f_n-f\|_{\SVD}\to 0$. Assuming the proposition holds for local
functions, each $\mu_n$ is $f_n$-conformal, and so by proposition
\ref{prop:limit_of_gibbs} $\mu$ is $(f,T^0)$-conformal.

The rest of the proof is establishes the result for local functions
$f:X \to \mathbb{R}$. A further reduction is to assume that $f$ is a
\emph{site potential}, meaning $f(x)$ depends only on $x_0$. This is
no loss of generality, since for any local function $f$ there is an
isomorphism of $X$ which recodes $\Pi(x)_0=[x]_{F_n}$ for
sufficiently large $n$, which maps $f$ onto a site potential
$\overline{f}$, maps any conformal measure for $f$ onto a conformal
measure for $\overline{f}$, and an $f$-equilibrium onto an
equilibrium for $\overline{f}$.

Introduce an increasing sequence of sub-relations of $\T^0_n \subset
\T^0$, such that $\T^0= \bigcup_{n \ge 1} T^0_n$: $\T^0_n$ is the
orbit relation of
$$\mathcal{F}_n(X)= \{ g \in \Homeo(X):~  \forall |k|> n \forall x \in X~g(x)_k = x_k\}.$$
We will prove that $\mu$ is $(\phi_f,\T^0_n)$-conformal, for each
$n$, thereby show  $\mu$ is $(\phi_f,\T^0)$-conformal:

We begin by proving that  $\mu$ is $(\phi_f,\T^0_0)$-conformal. For
$a \in \Sigma$, let $\overline{a} \subset \Sigma$ denote the
equivalence class of $a$ under the relation $\sim_X$ spanned by
$\mathcal{F}_X$. Let $\overline{\Sigma}=\{ \overline{a} : a \in
\Sigma\}$. The map $\pi:X \to \overline{\Sigma}^\ZD$ defined by
$\pi(x)_n = \overline{x_n}$ is a factor map onto
$\overline{X}=\pi(X)$. Observe that for any $y \in \overline{X}$,
$$\pi^{-1}(y)=\{x \in \Sigma^{\ZD}: ~ x_n \in y_n\,  \forall n \in \ZD \}$$

Define $\mu_0 \in \mathcal{P}(X)$ by setting
$$\mu_0 \pi^{-1}= \mu \circ \pi^{-1}$$ and

$$\mu_0([x]_F \mid \pi(x)) = \prod_{n \in F}p(x_n),$$
where
\begin{equation}\label{eq:gibbs_prob}
p(a)=\left(\sum_{b \sim a}\exp f(b)\right)^{-1}\exp f(a),~ a\in
\Sigma
\end{equation}
By definition, the probability $\mu_0$ is defined so that  the
coordinates of a point $x$ are relatively independent over $\pi(x)$,
with probabilities proportional to $\exp(f(x_n))$.

Let us compare $P_{\mu}(f)$ and $P_{\mu_0}(f)$:

$$P_{\mu}(f)=h_{\mu}(X)+\int f(x)d\mu(x)=$$
$$=h_{\mu\circ \pi^{-1}}(\overline{X})+ h_{\mu}(X \mid \pi) +\int f(x)d\mu(x)=$$
$$=h_{\mu_0\circ \pi^{-1}}(\overline{X}) + h_{\mu}(X \mid \pi) +\int f(x)d\mu(x)$$

Now $h_{\mu}(X \mid \pi) \le \int H_{\mu(\cdot \mid \pi(x)_0)}(x_0)
d\mu(x)$, with equality holding iff there is relative independence
of the coordinates for $\mu$ given the projection $\pi$, and $x_n$
in independent of $\pi(x)$ given $\pi(x)_n$. Also, for every
$\overline{a} \in \overline{\Sigma}$,
$$H_{\mu(\cdot \mid \pi(x)_0=\overline{a})}(x_0) + \int_{\pi^{1}[\overline{a}]_0\}}\phi(x)d\mu(x) \le H_{\mu_0(\cdot \mid \pi(x)_0
=\overline{a})}(x_0) +
\int_{\pi^{-1}[\overline{a}]_0}\phi(x)d\mu_0(x)$$

We conclude that $P_{\mu_0}(f) \ge P_{\mu}(f)$ with equality iff
$\mu=\mu_1$. The measure $\mu_1$ was defined so that $\frac{d\mu
\circ f_{a,b}}{d\mu}(x)=\log f(x,f_{a,b}(x))$ for $a \sim b$ $a,b\in
\Sigma$. Thus, $\mu$ is $\T^0_1$-conformal.

To prove that $\mu$ is $\T^0_n$ conformal for $n >1$, repeat the
previous argument, combined with the following property of
equilibrium measures for actions of sub-lattices of $\ZD$: let
$$A_n(f):=\frac{1}{|F_n|}\sum_{k \in F_n}f\circ T_k,$$ then $\nu$ is
an equilibrium for the function $A_n(f)$, with respect to
translations by the sublattice  $n\ZD$ iff $A_n(\nu)$ is an
equilibrium for $f$ with respect to translations in  $\ZD$.
Furthermore, if $\mu$ is invariant with respect to translations in
$\ZD$, $A_n(\mu)=\mu$.

Now define $\mu_n$ to be a $\T^0_n$-conformal measure, obtained from
$\mu$ similarly to $\mu_0$, except that  every $n$-cube
configuration is recoded into one symbol, and consider translations
by $n\ZD$. It follows that the pressure of $\mu_n$ with respect to
$A_n(f)$ is greater or equal to that for $\mu$, with equality iff
$\mu_n=\mu$. This proves that $\mu$ is $(\phi_f,\T^0_n)$-conformal.
\end{proof}

Here is a direct corollary of this result:
\begin{cor}\label{cor:reg_equilibrium_Gibbs}
  Let $X \subset S^\ZD$ be a subshift, $f \in \SVD(X)$, and $\mu$ an
  $f$-equilibrium. If the support of $\mu$ contains no
  $\T$-singularities, then $\mu$ is a Gibbs measure for $f$.
\end{cor}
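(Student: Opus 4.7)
The plan is to deduce the corollary directly from Theorem~\ref{thm:local_lanford_ruelle}, which already gives that $\mu$ is $(\phi_f,\T^0)$-conformal; the task is to upgrade this to full $(\phi_f,\T)$-conformality. By definition, every point $x$ in the support of $\mu$ satisfies $\T(x)=\T^0(x)$, so the two equivalence relations agree as set-theoretic partitions on $\mathrm{supp}(\mu)$.

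Fix a countable group $\Gamma \subset \mathit{Aut}(X)$ of Borel automorphisms generating $\T$, which exists by Feldman-Moore. As observed in the preliminaries, to verify that $\mu$ is $(\phi_f,\T)$-conformal it suffices to check, separately for each $\gamma \in \Gamma$, that $\frac{d\mu\circ\gamma}{d\mu}(x) = \exp\phi_f(x,\gamma x)$ holds $\mu$-a.e. So I would fix $\gamma \in \Gamma$ and a Borel set $E \subset X$, and reduce to $E \subset \mathrm{supp}(\mu)$, which only discards a $\mu$-null set.

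For every $x \in E$, we have $(x,\gamma x) \in \T = \T^0$ since $x$ is $\T$-regular, so there exists some $\eta \in \mathcal{F}(X)$ with $\eta(x)=\gamma(x)$. Enumerating $\mathcal{F}(X) = \{\eta_1,\eta_2,\ldots\}$ and setting $E_k := \{x \in E : \eta_k(x)=\gamma(x)\} \setminus \bigcup_{j<k}E_j$ produces a Borel partition of $E$ on each of whose pieces $\gamma$ coincides with a fixed element of $\mathcal{F}(X)$. Using $(\phi_f,\T^0)$-conformality on each piece,
$$\mu(\gamma E_k) = \mu(\eta_k E_k) = \int_{E_k} e^{\phi_f(x,\eta_k x)}\, d\mu(x) = \int_{E_k} e^{\phi_f(x,\gamma x)}\, d\mu(x),$$
and summing over $k$ gives the same identity on $E$. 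This proves $\mu$ is $(\phi_f,\T)$-conformal, i.e.\ a Gibbs measure for $f$.

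There is no deep obstacle; the only thing to be careful about is the countable Borel disjointification, which works because $\mathcal{F}(X)$ is a countable group of homeomorphisms, so each set $\{x : \eta_k(x)=\gamma(x)\}$ is Borel. The conceptual content is entirely contained in Theorem~\ref{thm:local_lanford_ruelle} together with the observation that the regularity hypothesis collapses the distinction between $\T$ and $\T^0$ on the support of $\mu$.
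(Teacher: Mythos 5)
Your argument is correct and rests on the same two pillars as the paper's own proof: Theorem \ref{thm:local_lanford_ruelle} supplies $(\phi_f,\T^0)$-conformality, and the regularity hypothesis collapses $\T$ to $\T^0$ on a set of full measure. The implementations differ slightly. The paper builds a generating group for $\T$ of the form $\langle \Gamma_0,\mathcal{F}(X)\rangle$, where $\Gamma_0$ is a countable group of $\T$-holonomies fixing pointwise a $\T$-saturated Borel set $X_0$ of full measure (so its elements are the identity modulo $\mu$ and satisfy the Radon--Nikodym condition trivially), and then checks conformality generator by generator. You instead take an arbitrary Feldman--Moore generating group $\Gamma$ and, for each $\gamma\in\Gamma$, disjointify its domain according to which element of the countable group $\mathcal{F}(X)$ it agrees with; this exhaustion is sound, since each set $\{x:\eta_k(x)=\gamma(x)\}$ is Borel and the images $\gamma E_k$ are disjoint.

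One step needs more justification than ``this only discards a $\mu$-null set.'' When you throw away $E':=E\setminus\mathrm{supp}(\mu)$, you must still know that $\mu(\gamma E')=0$; for a general Borel automorphism, discarding a null set from the domain can change the measure of the image, and the assertion that $\gamma$ maps null sets to null sets is essentially the nonsingularity you are in the middle of proving. The fix is to observe that $\mathrm{supp}(\mu)$ is $\T$-saturated: it is $\mathcal{F}(X)$-invariant because $\mu$ is $\T^0$-nonsingular and $\mathcal{F}(X)$ consists of homeomorphisms, and then, for $y\in\mathrm{supp}(\mu)$, regularity gives $\T(y)=\T^0(y)\subset\mathrm{supp}(\mu)$. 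Hence $\gamma E'$ lies in the null set $X\setminus\mathrm{supp}(\mu)$ and the reduction is legitimate. This is exactly the point the paper takes care of by phrasing the hypothesis as the existence of a $\T$-\emph{saturated} Borel set of full measure on which $\T$ and $\T^0$ coincide; with that observation added, your proof is complete.
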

\begin{proof}
Let $X$, $f$ and $\mu$ satisfy the conditions above. Let $\varphi:X
\to X$ be a $\T$-holonomy. The assumption that $\mu$'s support
contains no $\T$-singularities is equivalent to the existence of a
$\T$-saturated Borel set $X_0 \subset X$ with $\mu(X_0)=1$ and
$(\T_X \cap X_0)= (\T^0_X \cap X_0)$. Let $\Gamma_0$ be a countable
group of $\T$-holonomies, which fix all points in $X_0$. Thus,
$\Gamma_0$ together with $\mathcal{F}(X)$ generate $\T(X)$. The
elements of $\Gamma_0$ are equal to the identity modulo $\mu$. By
theorem \ref{thm:local_lanford_ruelle} above, $\mu$ is
$(\T^0,f)$-conformal. It follows that $\frac{d\mu \circ
\gamma}{d\mu}(x)=\exp\left(\phi_f(x,\gamma x)\right)$ for all
$\gamma \in \langle \Gamma_0, \mathcal{F}(X) \rangle$. Thus, $\mu$
is $(\phi_f,\T)$-conformal.
\end{proof}


\textbf{Remark}: Observe that Dobru{\v{s}}in's theorem and
Lanford-Ruelle theorem are valid for SFTs in any dimension $d \ge
1$. When $d=1$, there is in fact a unique equilibrium for any $f \in
\SVD(X)$, when $X$ is an SFT. In this case, the unique equilibrium
is also the unique translation invariant Gibbs measure. One approach
for proving this is via one-sided subshifts and the Ruelle operator,
as in Bowen \cite{bowen_equlibrium} and Walters
\cite{walters_g_func}. If the $X$ is an irreducible $1$-dimensional
SFT, there is also a unique Gibbs measure.

The rest of this paper is dedicated to some examples of equilibrium
and Gibbs measures for some subshifts which are not of finite type.

\section{\label{sec:kalikow} Kalikow-type subshifts}
In this section we study $\T$-invariant measures and measures of
maximal entropy for a family of subshifts obtained by ``a random
walk with random scenery''. We call these Kalkow-type subshifts, in
homage to Kalikow's  paper about the $T$-$T^{-1}$ transformation
\cite{kalikow82}.

Let $T:X \to X$ be a homeomorphism of a compact space $X$. The $T-
T^{-1}$ transformation is the skew-product $\widehat{T}:
\{-1,1\}^\ZZ \times X \to \{-1,1\}^\ZZ \times X$ given by
$\widehat{T}(x,y)=(\sigma x, T^{x_0}y)$, where $\sigma$ is the shift
map on $\{-1,1\}^\ZZ$. The map $\widehat{T}$ is a homeomorphism of
$\{-1,1\}^\ZZ \times X$. We will make use of results of Marcus and
Newhouse \cite{marcus_newhouse_78}, which  describe measures of
maximal entropy for skew products of this form.

Let us restrict to the case where  $X \subset \Sigma^\ZZ$ is a
subshift, and $T:X \to X$ is the shift. In this case, a subshift
$\hX \subset (\{-1,1\}\times\Sigma)^\ZZ$ appears naturally as a
factor of $(\{+1,-1\}^\ZZ\times X,\widehat{T})$, as the image of the
map $\pi: \{-1,1\}^\ZZ \times X \to \{-1,1\}^\ZZ \times \Sigma^\ZZ$
given by: $\pi(x,y)_n=(\widehat{T}^n(x,y))_0$. Call $\hX$ the
\emph{Kalikow-type subshift} associated with $X$. Observe that the
map $\pi$ is injective on a dense orbit: Namely, it is injective
when restricted to the following dense subset:
$$\{(w,n) \in \{+1,-1\}^\ZZ\times X:~ \inf_{m<M}\sum_{k=m}^Mw_k=-\infty ,~
\sup_{m<M}\sum_{k=m}^Mw_k=+\infty\}.$$

Let us describe the admissible words for $\hX$. First, we define
$\Phi:\{+1,-1\}^* \to \ZZ$ by
$$\Phi(x_1,\ldots,x_n)=\sum_{i=1}^{n}x_i$$ The subshift $\widehat{X}
\subset (\{-1,1\}\times\Sigma)^\ZZ$ is the set of all sequences
$\left(\ldots,(x_{-1},y_{-1}),(x_0,y_0),(x_1,y_1),\ldots\right)$
with the following restrictions:
\begin{enumerate}
\item{For all $i,j \in \ZZ$, with $i \le j$ if $\Phi(x_i,\ldots,x_j)=0$ then $y_i=y_j$.}
\item{For any finite subset $I \subset \ZZ$ with $\min I = m$, there exist $z \in X$
such that $z_{s(i)}=y_{i}$ for all $i \in I$, where
$s(i)=\Phi(x_m,\ldots,x_i)$ .}
\end{enumerate}

The simplest case of the above construction is when $X=\{0,1\}^\ZZ$
is the $2$-shift. For $p \in (0,1)$ let $\mu_p$ be the measure on
$\hX$ defined by projecting via $\pi$ the product measure of the
$(p,1-p)$ i.i.d product on $\{0,1\}^\ZZ$ with the symmetric product
measure on the $2$-shift $X$. The measure theoretic entropy of the
shift on $\hX$ with respect to the measure $\mu_p$ is $|2p-1|\log 2
-p\log p - (1-p)\log(1-p)$. A simple calculations shows that this
expression is maximized when $p=0.2$ or $p=0.8$. These are the only
ergodic measures of maximal entropy for this subshift.

The entropy of $(\hX,\mu_{\frac{1}{2}},\sigma)$ is easily shown to
be $\log(2)$. As we will see, the measure $\mu_{\frac{1}{2}}$ is
$\T_2(\hX)$-invariant, but is not a measure of maximal entropy for
the subshift $\hX$.

By using the Rokhlin-Abramov formula for the entropy of a
skew-product, Marcus and Newhouse obtained the following entropy
calculation:

\begin{prop}\label{prop:mu_p_entropy}
Let $X=\{0,1\}^\ZZ$, $p \in (0,1)$ and $\mu_p$ be the measure on the
Kalikow-type subshift $\hX$ defined above. Then:
$$h_{\mu_p}(\hX)=|2p-1|\log2+H(p)$$
where $H(p)=p\log(p)+(1-p)\log(1-p)$.
\end{prop}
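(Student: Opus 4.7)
The plan is to compute $h_{\mu_p}(\hX)$ directly from a generating partition, bypassing any entropy-preservation properties of the factor map $\pi$. Let $\alpha$ be the four-atom partition of $\hX$ by the zeroth coordinate. Since cylinder sets at time $0$ generate the Borel $\sigma$-algebra of $\hX$ under the shift, $\alpha$ is a generator, and hence
$$h_{\mu_p}(\hX) \;=\; \lim_{n\to\infty}\frac{1}{n}H_{\mu_p}(\alpha_n),\qquad \alpha_n=\bigvee_{j=0}^{n-1}\sigma^{-j}\alpha.$$

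Write $\tilde\mu_p$ for the product of the Bernoulli$(p)$ measure on $\{-1,1\}^\ZZ$ and the symmetric Bernoulli measure on $X=\{0,1\}^\ZZ$, so that $\mu_p=\pi_{\ast}\tilde\mu_p$. Pulling back through $\pi$, an atom of $\alpha_n$ corresponds to prescribing the walk increments $x_0,\dots,x_{n-1}$ together with the scenery values $y_{s(0)},\dots,y_{s(n-1)}$, where $s(i)=\sum_{j=0}^{i-1}x_j$. Under $\tilde\mu_p$ the walk and scenery are independent, and the $y$-values are i.i.d.\ Bernoulli$(1/2)$, so conditional on $x$ the scenery vector $(y_{s(0)},\dots,y_{s(n-1)})$ carries entropy exactly $R_n(x)\log 2$, where $R_n(x)=\#\{s(0),\dots,s(n-1)\}$ is the range of the walk on $[0,n)$. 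Thus
$$H_{\mu_p}(\alpha_n)\;=\;n\bigl(-p\log p-(1-p)\log(1-p)\bigr)\;+\;(\log 2)\,E_{\tilde\mu_p}[R_n].$$

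It remains to show $\tfrac{1}{n}E_{\tilde\mu_p}[R_n]\to|2p-1|$, which is a classical random walk estimate. For $p\ne \tfrac12$ the walk has nonzero drift $2p-1$: assuming $p>\tfrac12$, the strong law gives $s(n)/n\to 2p-1$ while $\min_{k\ge 0}s(k)>-\infty$ almost surely, forcing $R_n\sim(2p-1)n$ a.s.\ and in $L^1$ (uniform integrability is automatic from $R_n\le n$). For $p=\tfrac12$ the simple symmetric random walk on $\ZZ$ satisfies $E[R_n]=O(\sqrt n)$, so $R_n/n\to 0$. Combining the three steps yields
$$h_{\mu_p}(\hX)\;=\;-p\log p-(1-p)\log(1-p)+|2p-1|\log 2,$$
which is the asserted identity (matching the paper's sign convention for $H(p)$).

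The only nontrivial ingredient is the range asymptotic $E[R_n]/n\to|2p-1|$, but this is a classical input; everything else is a direct partition computation. A more conceptual alternative would invoke the Rokhlin--Abramov formula for the skew product $(\widehat T,\tilde\mu_p)$ in the spirit of Marcus and Newhouse, after checking that the $y$-coordinates lying outside the range of the walk contribute zero to the entropy rate; this reduces to essentially the same range estimate.
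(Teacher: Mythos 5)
Your proof is correct, and it takes a genuinely different route from the paper: the paper does not actually prove this proposition but quotes it from Marcus and Newhouse, who obtain it from the Rokhlin--Abramov formula for the skew product $(\{-1,+1\}^\ZZ\times X,\widehat{T},\tilde\mu_p)$. You instead work directly on the subshift $\hX$ with the time-zero generating partition, and your key identity $H_{\mu_p}(\alpha_n)=n\bigl(-p\log p-(1-p)\log(1-p)\bigr)+(\log 2)\,E[R_n]$ is exact, because conditionally on the walk increments the observed scenery values are uniform over the $2^{R_n(x)}$ consistent configurations. The reduction to the range asymptotic $E[R_n]/n\to|2p-1|$ (the no-return probability, in the spirit of Kesten--Spitzer--Whitman) is the right classical input, and your elementary justification suffices: for $p\ne\tfrac12$ the nearest-neighbour walk visits every integer between $\min_{k<n}s(k)$, which is a.s.\ bounded, and $\max_{k<n}s(k)\sim(2p-1)n$, so $R_n/n\to|2p-1|$ a.s.\ and in $L^1$ since $R_n\le n$; for $p=\tfrac12$ one has $E[R_n]=O(\sqrt n)$. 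What your approach buys is self-containedness: you never need to know that $\pi$ is injective (or even entropy-preserving) $\tilde\mu_p$-almost everywhere, a point the skew-product route must address before transferring $h_{\tilde\mu_p}(\widehat{T})$ to $h_{\mu_p}(\hX)$; what the Marcus--Newhouse route buys is that the same Abramov--Rokhlin setup feeds directly into the variational problem behind Proposition \ref{prop:t_t_inverse_entropy_formula}. One small remark: as printed, the paper's $H(p)=p\log(p)+(1-p)\log(1-p)$ is the negative of the Shannon entropy, so the displayed formula has a sign slip in that term; the value you derive, $|2p-1|\log 2-p\log p-(1-p)\log(1-p)$, is the correct (positive) one.
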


Furthermore, by solving a variational problem using and the
Rokhlin-Abramov formula, Marcus and Newhouse obtain the following
proposition:

\begin{prop} \label{prop:t_t_inverse_entropy_formula} Let $X$ be any subshift with
topological entropy $\log t$. Let $\hX$ be the Kalikow-type subshift
associated with $X$. Then the topological entropy of the subshift
$\hX$ is
$$\htop(\hX)=\log(\frac{t^2+1}{t})$$
\end{prop}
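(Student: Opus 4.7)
The plan is to reduce the computation of $\htop(\hX)$ to a one-parameter variational problem on the Bernoulli base. The factor map $\pi:\{-1,1\}^\ZZ\times X\to\hX$ given by $\pi(w,y)_n=(w_n,y_{S_n(w)})$, where $S_n(w)=\sum_{i=0}^{n-1}w_i$, is shift-equivariant and injective on the $\widehat{T}$-invariant set where the random walk $S_\cdot(w)$ has range all of $\ZZ$. For every ergodic $\widehat{T}$-invariant probability measure $\mu^*$ whose base projection $\nu$ has $p:=\nu([+1])\in(0,1)$, the ergodic theorem forces $S_n\to+\infty$ in one direction and $-\infty$ in the other, so this set has full $\mu^*$-measure; the degenerate cases $p\in\{0,1\}$ contribute entropy at most $\log t$, which will be subsumed by the final answer since $\log\tfrac{t^2+1}{t}=\log(t+1/t)\ge\log t$. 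Combined with the trivial inequality $\htop(\hX)\le\htop(\widehat{T})$, the variational principle then gives
\[
\htop(\hX)=\htop(\widehat{T})=\sup_{\mu^*}h_{\mu^*}(\widehat{T}).
\]

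Given $\mu^*$ projecting to $\nu$ on $\{-1,1\}^\ZZ$ with $p=\nu([+1])$, the Abramov--Rokhlin formula, together with the random ergodic theorem (as exploited by Marcus and Newhouse in \cite{marcus_newhouse_78}), yields
\[
h_{\mu^*}(\widehat{T})=h_\nu(\sigma)+h_{\mu^*}(\widehat{T}\mid\sigma)\le h_\nu(\sigma)+|2p-1|\,\htop(X),
\]
the point being that after $n$ iterates the fibre map is $T^{S_n(w)}$ with $S_n/n\to 2p-1$ a.s., so the effective rate of $T$ along fibres is $|2p-1|$. Equality is attained by $\mu^*=\nu_p\times\mu_X$, where $\nu_p$ is Bernoulli$(p,1-p)$ and $\mu_X$ is a measure of maximal entropy of $X$ (which exists because $X$ is expansive). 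Among $\sigma$-invariant measures on $\{-1,1\}^\ZZ$ with $\nu([+1])=p$, entropy is maximised by $\nu_p$, giving $h_\nu(\sigma)\le H(p):=-p\log p-(1-p)\log(1-p)$. Hence
\[
\htop(\hX)=\sup_{p\in[0,1]}\bigl[\,H(p)+|2p-1|\log t\,\bigr].
\]

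By the $p\mapsto 1-p$ symmetry we may assume $p\ge\tfrac12$. Setting $\frac{d}{dp}\bigl[H(p)+(2p-1)\log t\bigr]=0$ gives $\log\frac{1-p}{p}+2\log t=0$, so the maximiser is $p=\tfrac{t^2}{t^2+1}$. Substituting,
\[
H\!\left(\tfrac{t^2}{t^2+1}\right)+\tfrac{t^2-1}{t^2+1}\log t=\log(t^2+1)-\tfrac{2t^2}{t^2+1}\log t+\tfrac{t^2-1}{t^2+1}\log t=\log\tfrac{t^2+1}{t},
\]
as claimed. The main obstacle is the upper bound $h_{\mu^*}(\widehat{T}\mid\sigma)\le|2p-1|\htop(X)$ for arbitrary (non-product) $\widehat{T}$-invariant $\mu^*$: this requires the relative variational principle for skew products together with a careful disintegration over the base, and is precisely the content of the Rokhlin--Abramov-based computation of Marcus and Newhouse; everything else in the proof is the elementary one-variable optimisation above.
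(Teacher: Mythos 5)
Your proposal is correct and follows essentially the route the paper itself indicates: the paper gives no independent proof of this proposition but attributes it to Marcus and Newhouse's solution, via the Abramov--Rokhlin formula, of exactly the variational problem $\sup_p\left[H(p)+|2p-1|\log t\right]$ that you set up and solve. The one step you (rightly) defer to \cite{marcus_newhouse_78} --- the bound $h_{\mu^*}(\widehat{T}\mid\sigma)\le|2p-1|\,\htop(X)$ for arbitrary, not necessarily product, invariant measures --- is also the step the paper takes on citation, and your optimisation yielding $p=t^2/(t^2+1)$ and the value $\log\frac{t^2+1}{t}$ is correct.
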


Starting with any subshift $X$ and using arguments from
\cite{marcus_newhouse_78}, it is relatively simple describe all
measures of maximal entropy for $\hX$ in terms of the measures of
maximal entropy of $X$. For example, in case
$X=\{0,1,\ldots,N\}^\ZZ$, by maximizing the expression in
proposition \ref{prop:mu_p_entropy}, we see that
$\htop(\hX)=\log(\frac{N^2+1}{N})$ and the egodic measures of
maximal entropy are $\mu_{p}$ with $p=\frac{N^2}{1+N^2}$ or
$p=\frac{1}{1+N^2}$.

The following proposition is a partial analog of the Lanford-Ruelle
theorem for Kalikow-type subshifts, with respect to
$f=\mathit{const}$.
\begin{prop}\label{prop:kalikow_lanford_ruelle}
Let $X=\{1,\ldots,N\}^\ZZ$ be a full-shift,  and let $\hX$ be the
associated Kalikow-type subshift. All measures of maximal entropy
for $\hX$ are $\T$-invariant.
\end{prop}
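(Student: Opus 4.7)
My plan is first to apply Theorem \ref{thm:local_lanford_ruelle} with $f\equiv 0$, which immediately gives that any measure of maximal entropy $\mu$ on $\hX$ is $\T^0$-invariant. The task then reduces to upgrading this to full $\T$-invariance for these specific measures.

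Next I would identify the MMEs concretely. By the entropy formula of Proposition \ref{prop:mu_p_entropy} (extended from $N=2$ to the full shift on $N$ letters via the Rokhlin--Abramov arguments of \cite{marcus_newhouse_78}), every ergodic MME has the form $\mu_p=\pi_*(\nu_p\otimes\lambda_N)$ with $p/(1-p)=N^{\pm 2}$, where $\nu_p$ is Bernoulli$(p)$ on the walk coordinates and $\lambda_N$ is uniform Bernoulli on $X$. By convexity it suffices to verify $\T$-invariance for each ergodic $\mu_p$. Since $p\neq 1/2$, the walk is $\mu_p$-almost surely transient (by the strong law of large numbers), so each lattice site is visited only finitely often, and every $\hat{y}\in\T(\hat{x})$ differs from $\hat{x}$ in only finitely many coordinates.

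To actually establish $\T$-invariance, I would compute Radon--Nikodym derivatives directly on a countable generating family of $\T$-holonomies, using the explicit cylinder formula
$$\mu_p([w]_F)=p^{n_+(w)}(1-p)^{n_-(w)}N^{-|R(w)|},$$
where $|R(w)|$ is the number of distinct lattice sites visited by the walk of $w$. The key algebraic identity is that for a $\T$-holonomy modifying the walk on a finite window by a net drift change $C$, the walk-Bernoulli factor contributes $(p/(1-p))^{C/2}=N^{C}$, and this is exactly balanced by a factor $N^{-C}$ coming from the additional scenery-matching conditions that the shift of the walk's visited range forces on the domain of the holonomy. The cancellation uses the specific value $p/(1-p)=N^2$ in an essential way.

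The hardest part will be the bookkeeping. In sharp contrast to the SFT case, drift-changing swap involutions in $\mathcal{F}(\hX)$ are generically not globally admissible, so $\T^0\subsetneq\T$ on a set of full $\mu_p$-measure and Corollary \ref{cor:reg_equilibrium_Gibbs} is unavailable. The domain of a $\T$-holonomy is a proper sub-cylinder of its nominal window-cylinder, and one must carefully track how the induced scenery constraints restrict $\mu_p$ on this subdomain; transience, provided by $p\neq 1/2$, is what keeps these induced conditions finitary, so that the Radon--Nikodym factor is a finite product of $N^{-1}$ and $N^{C}$ pieces that can be matched term by term.
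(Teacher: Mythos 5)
Your proposal is correct and follows essentially the same route as the paper: identify the ergodic measures of maximal entropy as $\pi_*(\nu_p\otimes\lambda_N)$ with $p/(1-p)=N^{\pm2}$ via Marcus--Newhouse, restrict to the full-measure transient set, and verify invariance on an explicit generating family of finite-window (excursion-swapping) holonomies, with the walk factor $(p/(1-p))^{C/2}=N^{C}$ cancelling against the $N^{-C}$ from the change in the number of scenery sites revealed. The only difference is your preliminary appeal to Theorem \ref{thm:local_lanford_ruelle}, which the paper does not use and which your argument does not actually need.
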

\begin{proof}
By Marcus and Newhouse's result above, the ergodic measures of
maximal entropy $\mu_+$ and $\mu_-$ are obtained by taking
  the product of $(p,q)$ -bernoulli measure on the base and Haar measure on the ``scenery'',
where $p=\frac{N^2}{1+N^2}$ or $p=\frac{1}{1+N^2}$ respectively. We
will prove the proposition for $\mu_{+}$. The proof for $\mu_{-}$ is
symmetric.

Let
$$\hX_{+}=\{z \in \hX:~ \lim_{n \to +\infty}\Phi_n(z)=+\infty ~, \lim_{n \to
-\infty}\Phi(z)=-\infty\},$$ where:
$$\Phi_k((x_n,y_n)_{n \in \ZZ}) = \left\{\begin{array}{cc} \sum_{j=0}^{k-1}x_j & k>0\\
0 & k=0 \\
-\sum_{j=k}^{-1}x_k & k<0 \end{array} \right.$$

$\hX_{+}$ is a Borel subset which is saturated with respect to
$\T_{\hX}$. Also, for any $p>\frac{1}{2}$ $\mu_{p}(\hX_{+})=1$. In
particular, $\mu_+(\hX_+)=1$.

Next, we describe a collection of $\T_{\hX_{+}}$-holonomies
$\mathfrak{C}_+$ which generates the Gibbs relation:  For $k,n \in
\ZZ$ define the following Borel set:
$$A_{k,n}=\{ z \in \hX:~ \forall j<k \,\Phi_{j}(z)<\Phi_k(z) ~ ,
\, \forall j > k+n \, \Phi_{j}(z)>\Phi_{k+m}(z)\}$$

Let $w_1,w_2$ be an admissible words for $\hX$, with $|w_i|=n$,
$\Phi(w_i)=m_i>0$ and $0 \le \Phi((w_i)_0^j) \le \Phi(w)$ for all $0
\le j \le n$ and $i=1,2$. The words $w_1,w_2$ represent
``excursions'' of length $n$ which terminate in the rightmost
coordinate in $m$. For any $x \in A_{k,n}$, changing the coordinates
from $k$ to $k+n$ for $w_1$ to $w_2$ yields an admissible sequence
in $A_{k,n}$, since by definition of $A_{k,n}$ the ``scenery''
visited in the time-interval $[k,k+n]$ is never visited outside this
time-interval. This defines a $\T_{\hX}$-holonomy
$g_{w_1,w_2;k}:([w_1]_k \cap A_{k,n}) \to ([w_2]_k \cap A_{k,n})$.
The collection $\mathfrak{C}_{+}$ consists of all holonomies of this
form.

We now show that $\mathfrak{C}_+$ generates $\T_{\hX_{+}}$. Let
$(x,y) \in \T_{\hX_{+}}$. Since $\lim_{n \to + \infty}\Phi(x)=
+\infty$ and $\lim_{n \to + \infty}\Phi(x)= +\infty$, there exist
infinitely many $M,N >0$ such that $\Phi_j(x)>\Phi_{M}(x)$ for all
$j
> M$, $\Phi_j(x)\le \Phi_{M}(x)$ for all
$j \le M$, $\Phi_i(x)<\Phi_{-N}(x)$ for all $i < -N$ and $\Phi_j(x)
\ge \Phi_{-N}(x)$ for all $j \ge -N$. Since $x$ and $y$ differ in
only a finite number of coordinates, for any $M,N$ large enough the
condition above will hold for $x$ and $y$ simultaneously. Thus, for
such $M,N$ let $w_1=x_{[-N,M]}$ and $w_2=y_{[-N,M]}$ then
$y=g_{w_1,w_2,-N}(x)$. This proves that the collection
$\mathfrak{C}_+$ generates $\T_{\hX_{+}}$.

Now we show that $\mu_{+}$ is invariant for any $g \in
\mathfrak{C}_+$. Let $w_1,w_2$ be admissible words as above. It
follows from direct computation that:

$$\mu_{+}([w]_k \cap A_{k,n})=\mu_{+}([w]_0)\mu_{+}(A_{k,n})$$
and
$$\mu_{+}([w]_0)=(\frac{1}{N})^{\Phi(w)}(p)^{\frac{n+\Phi(w)}{2}}(1-p)^{\frac{n-\Phi(w)}{2}}=$$
$$=\left(\sqrt{\frac{p}{1-p}}\frac{1}{N}\right)^{\Phi(w)}\left(p(1-p) \right)^{\frac{n}{2}}=$$
$$= \left( \frac{\sqrt{N^2}}{N}\right)^{\Phi(w)}\left(p(1-p) \right)^{n}.$$
Where $$A(w) =\{x \in \hX:~ \Phi_j(x)<0 \,\forall j<0 \mbox{ and }
\Phi_l(x)>\Phi(w) \forall l>n \}$$

Because this number is determined by $n$ and does not depend on the
value of $w$, this proves that $g$ preserves the measure of any such
set. It follows that $\mu_+$ is indeed $\T$-invariant.
\end{proof}

We will now identify an uncountable family of mutually singular
$\T_{\hX}$-invariant measures. This demonstrates a  dramatic failure
of the conclusion of Dobru{\v{s}}in's theorem for Kalikow-type
subshift: In absence of Dobru{\v{s}}in's condition, many ergodic
translation invariant Gibbs measures which are not equilibrium can
occur. Whenever $X=\{1,\ldots,N\}^\ZZ$, the subshift $\hX$ as above
has uncountably many ergodic $\T_{\hX}$-invariant measures which are
not probabilities, and the shift-invariant maps of $X$ can be mapped
via an injection into the $\T_{\hX}$-invariant maps of $\hX$.
Furthermore, applying the result of Kalikow, $\hX$ admits a measure
$\T_{\hX}$-invariant and shift invariant, is $K$ but not Bernoulli.
\begin{prop}
Let $\nu$ be a non-atomic $\sigma$-invariant probability measure on
$X$. Let $\widehat{\nu}=(P\times \nu)\circ\pi^{-1}$ where $P$ is the
symmetric Bernoulli measure on $\{+1,-1\}^\ZZ$, and
$\pi:\{-1,+1\}^\ZZ\times X \to \hX$ is the factor-map described in
the beginning of this section. The measure $\widehat{\nu}$ is
$\T_{\hX}$-invariant.
\end{prop}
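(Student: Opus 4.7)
The plan is to use recurrence of the symmetric random walk to identify, on a $\widehat{\nu}$-full set, every $\T_{\hX}$-holonomy with a finite modification of the walk coordinate followed by a bounded shift of the scenery coordinate, each of which separately preserves $P\times\nu$.

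First I would introduce $\hX_{\mathit{rec}}\subset\hX$, the set of $z$ for which the walk $\Phi_n(z)$ visits every integer infinitely often as $n\to\pm\infty$. By recurrence of the symmetric simple random walk on $\ZZ$, the set of $(w,\eta)\in\{-1,+1\}^\ZZ\times X$ with $w$ recurrent has full $P\times\nu$-measure, so $\widehat{\nu}(\hX_{\mathit{rec}})=1$. On $\hX_{\mathit{rec}}$ the map $\pi$ is injective, since the walk coordinate recovers $w$ and the scenery $\eta\in X$ is recovered via $\eta_{\Phi_n(z)}=y_n$. I would further restrict to $\hX^*_{\mathit{rec}}$, the set where the recovered $\eta$ is $\sigma$-aperiodic; since the set of $\sigma$-periodic points in $X$ is countable, non-atomicity of $\nu$ gives $\widehat{\nu}(\hX^*_{\mathit{rec}})=1$.

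Next I would pull the relation $\T_{\hX}$ back to $\{-1,+1\}^\ZZ\times X$ on this set. Given $(z,z')\in\T_{\hX}$ with $z=\pi(w,\eta)$ and $z'=\pi(w',\eta')$ both in $\hX^*_{\mathit{rec}}$, the equality $z_n=z'_n$ outside some window $[-N,M]$ forces $w$ and $w'$ to agree outside $[-N,M]$; the differences $\Phi_n(w)-\Phi_n(w')$ are then constant $c_+$ for $n>M$ and $c_-$ for $n<-N$. Matching the sceneries $\eta_{\Phi_n(w)}=\eta'_{\Phi_n(w')}$ off the window, together with recurrence of $w'$, yields $\eta'=\sigma^{c_+}\eta=\sigma^{c_-}\eta$; aperiodicity of $\eta$ then forces $c_+=c_-=:c$, so $w$ and $w'$ have the same total drift on $[-N,M]$ and $\eta'=\sigma^c\eta$. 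Modulo $\widehat{\nu}$-null sets, this completely describes the lifted relation.

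To verify invariance I would take an arbitrary $\T_{\hX}$-holonomy $g:A\to B$, restrict to $A\cap\hX^*_{\mathit{rec}}$, and partition the lifted domain into countably many Borel pieces on each of which $N$, $M$, $c$, and the image word $w'|_{[-N,M]}$ are constant. On such a piece the lifted map reads $(w,\eta)\mapsto(w',\sigma^c\eta)$ with a fixed local swap inside $[-N,M]$ and the identity outside. The symmetric Bernoulli $P$ assigns equal mass $2^{-(N+M+1)}$ to every cylinder over $[-N,M]$, so the finite-window swap preserves $P$; $\sigma$-invariance of $\nu$ gives $\sigma^c_*\nu=\nu$. Summing over pieces and pushing forward by $\pi$ yields $\widehat{\nu}(g^{-1}(B))=\widehat{\nu}(B)$. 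The main subtlety, and where non-atomicity of $\nu$ enters essentially, is the identification $c_+=c_-$: without aperiodicity of $\eta$ one would obtain only $\sigma^{c_+-c_-}\eta=\eta$, so that in principle a $\T_{\hX}$-holonomy could couple a shift of the scenery with a drift-changing modification of $w$, for which $\sigma$-invariance alone would not suffice. Removing this coupling on a $\widehat{\nu}$-full set is what makes the piecewise invariance check close cleanly.
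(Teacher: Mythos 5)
Your proof is correct and follows essentially the same route as the paper: restrict to the full-measure, $\T$-saturated set of recurrent walks with aperiodic scenery, identify every holonomy there (up to null sets) as a finite, endpoint-offset-matched rearrangement of the walk coupled with a rearrangement of the scenery, and verify that each such piece preserves $P\times\nu$ via the product structure. Your explicit bookkeeping of the scenery shift $\sigma^c$, with $\sigma$-invariance of $\nu$ invoked to absorb it and aperiodicity used to force $c_+=c_-$, is if anything slightly more careful than the paper's version, which re-anchors the scenery at the left edge of the modification window so that the shift disappears from the notation.
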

\begin{proof}
To prove $\T_{\hX}$-invariance of $\widehat{\nu}$, we will describe
a set of $\T_{\hX}$-holonomies which generate $\T_{\hX}$ restricted
to a $\T_{\hX}$-saturated set of full $\widehat{\nu}$-measure, and
show that each of these holonomies preserves the measure
$\widehat{\nu}$.

Let $\hX_0$ be the set of points $x\in \hX$ such that $\liminf_{n
\to \pm \infty}\Phi_n(x)=-\infty$ and $\limsup_{n \to \pm
\infty}\Phi_n(x)=+\infty$. Evidently, $\hX_0$ is a Borel set, it is
saturated set with respect to the Gibbs relation of $\hX$ and
$\widehat{\nu}(\hX_0)=1$.

We already explained that the restriction of
$\pi:\{-1,+1\}^\ZZ\times X \to \hX$  to $\pi^{-1}\hX_0$ is
injective. This enables us to define a  Borel function $S:\hX_0 \to
X$ by $S:= \mathit{Scn}\circ \pi^{-1}$ where
$\mathit{Scn}:\{-1,1\}^\ZZ\times X \to X$ is the obvious projection
onto $X$.
 For $x \in \hX_0$ and $
n \in \ZZ$, $S(x)_n:=a$ iif $\mathit{Scn}\circ\sigma^k x = a$ for
some (hence all) $k \in \ZZ$ such that $\Phi_k(x)=n$. Denote by
$\hX_1$ the subset of $\hX_0$ which consists of all points $x \in
\hX_0$ such that $S(x)$ is not periodic:
$$\hX_1 =\{x \in \hX_0:~ \not\exists  k \in \ZZ\setminus\{0\} ~ S(x)=\sigma^kS(x) \} $$

Let $a,b \in \{+1,-1\}^n$ with $\Phi(a)=\Phi(b)$, and $k \in \ZZ$.
We define a $\T(\hX)$-holonomy $g_{a,b;k}:(\mathit{Wlk}^{-1}[a]_k
\cap \hX_0) \to (\mathit{Wkj}^{-1}[b]_k \cap \hX_0)$, where
$\mathit{Wlk}:\hX \to \{-1,+1\}^\ZZ$ is the obvious projection. The
function $g_{a,b;k}$ changes the walk in the time-interval $[k,k+n]$
for $a$ to $b$, making the required ``rearrangements'' of the
scenery using the function $S$, so that
$S(\sigma^k(x))=S(\sigma^k(y))$. Formally, $g_{a,b;k}$ is defined
by:
$$g_{a,b;k}(x)_m = \left\{ \begin{array}{cc}
(b_{m+k},S(\sigma^{k}x)_{\Phi(a_1^{m-k})}) & k\le m \le n\\  x_n &
\mbox{else}\end{array} \right.
$$
 Because $\Phi(a)=\Phi(b)$, $x$ and
$g_{a,b;k}$ are in same positions in the scenery, outside the time
interval $[k,k+n]$. In other words, the condition $\Phi(a)=\Phi(b)$
grantees that $g_{a,b;k}(x) \in \hX$, because the excursion length
by $x$ from $k$ to $k+n$ is the same as the that of $g_{a,b}(x)$.

We now verify that $g_{a,b;k}$ preserves the measure $\tilde{\nu}$:
 For any cylinder $[a]_k \subset \hX$,
$$\tilde{\nu}([a]_k)=2^{-\Phi(a)}\nu(S([a]_0))$$
The function $g_{a,b;k}$ was defined so that
$S(g_{a,b;k}([a]_0))=S([a]_0)$  and $\Phi(a)=\Phi(b)$. We now see
that $\tilde{\mu}([a]_k)=\tilde{\mu}(g_{a,b;k}([a]_k))$.

It remains to verify that the set of $\T(\hX)$-holonomies
$\{g_{a,b}\}$ as above generates $\T(\hX_1)$. Suppose $(x,y) \in
\T(\hX_1)$. Intuitively, this is because within the set $\hX_1$
every location in the scenery is visited infinitely often, and the
scenery is not periodic,  the only way to change a finite number of
coordinates in a consistent manner is by rearranging the walk in a
finite time interval, retaining the scenery and the offsets of the
endpoints of the walk in this time-interval.

Here is a formal proof of this: There exist $n \in \NN$ such that
$x_k=y_k$ for all $k \in \ZZ$ with $|k|>n$. Because $x,y \in \hX_0$,
it follows that $S(x)=S(y)$. since $S(x)=S(y)$ is not periodic a
periodic point (by definition of $X_1$), it also follows that
$\Phi(x_{[-n,n]})=\Phi(y_{[-n,n]})$. We conclude that $y=g_{a,b}(x)$
with $a=x_{[-n,n]}$ and $b=y_{[-n,n]}$.
\end{proof}

\section{$\T$-invariant measure of \label{sec:beta} $\beta$-shifts}
The subject of the following section is the identification of the
$\T$-invariant measure for a certain one-parameter family of
subshifts $X_\beta$ where $\beta >1$ is a real number.
%
 Let $T_\beta$ be the self-map of the unit
interval $[0,1)$ given by $T_\beta(x)= \beta x \mod 1$.
Generalizing ordinary base $n$-expansions, the greedy $\beta$-expansion 
 of $x \in [0,1)$ is the sequence $(a_1,a_2,\ldots)$ defined by $a_k = \lfloor
T_\beta^{k}x \rfloor$ $k \ge 1$. It satisfies the identity $x =
\sum_{k=1}^{\infty}a_k \beta^{-k}$.

The (two sided) \emph{$\beta$-shift} $X_\beta \subset \{1\ldots
\lfloor \beta \rfloor\}^\mathbb{Z}$ is the
 subshift whose admissible words are partial greedy $\beta$-expansions of numbers in $[0,1)$.

Denote the $\beta$-expansion of $1$ by
$\omega(\beta)=\omega_0\omega_1\ldots$, so that
$$1=\sum_{n=0}^{\infty}\beta^{-(n+1)}\omega_n.$$
Assume that $\omega$ does not terminate with $0$'s.

Here is a concrete description of $X_\beta$ (see
\cite{schmidt_beta}):
$$X_{\beta}= \left\{ x \in \{ 0,\ldots,\lfloor \beta \rfloor\}^\ZZ :~ x_{[n,n+m]} \preceq (\omega_0,\ldots,\omega_{m}) \forall n \in \ZZ,\, m\ge 0 \right\},$$

where $\preceq$ denotes the lexicographic order of words.

It follows easily from well known results (as in
\cite{schmeling_beta}) that for a residual set of $\beta$'s, the
topological Gibbs relation is trivial, and so theorem
\ref{thm:local_lanford_ruelle} above gives no information about
Gibbs-measures of $X_\beta$.

Nevertheless, it follows from a result of Walters
\cite{walters_beta} that $X_\beta$ has a unique equilibrium which is
also the unique Gibbs measure, for any $f:X_\beta \to \mathbb{R}$
which is H\"{o}lder continuous with respect to the metric
$d(x,y)=\exp(-\min\{ |n| :~ x_n \ne y_n\})$. In the spacial case
where $f$ is a constant function, this unique equilibrium, which is
the measure of maximal entropy projects onto a measure which is
absolutely continuous with respect to Lebesgue measure. Parry proved
this in \cite{parry_beta}, and gave the following formula for the
density function:
$$h_\beta(x)=\sum_{n=0}^{\infty}1_{[0,T^n1]}(x)\frac{1}{\beta^n}.$$

To complement this we prove the following:
\begin{thm}
  For any $\beta >1$, the $\beta$-shift $X_\beta$ has a unique
  $\T$-invariant measure.
\end{thm}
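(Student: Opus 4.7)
The plan is to establish existence and uniqueness separately, in both cases centered on the Parry measure $\mu_\beta$.

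\emph{Existence.} The Parry measure $\mu_\beta$, the unique shift-invariant measure of maximal entropy on $X_\beta$, is $\T$-invariant. The verification reduces to showing $\mu_\beta([uav]) = \mu_\beta([ubv])$ whenever $a,b$ are admissible words of the same length and $u,v$ are contexts such that both concatenations lie in the language of $X_\beta$; this follows from the Markov-type structure of $\mu_\beta$ on the natural extension of $T_\beta$, expressed via the Parry density $h_\beta$. A direct computation in the style of the Fibonacci example (where one checks $\mu_\beta([000]) = \mu_\beta([010])$ using the transition probabilities) extends to the general setting by exploiting that swappable cylinders in $X_\beta$ correspond to words with matching left-admissibility and right-admissibility structure.

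\emph{Uniqueness.} Let $\mu$ be any $\T$-invariant probability on $X_\beta$. The plan is to push $\mu$ forward to $[0,1)$ via the $\beta$-expansion and invoke Parry's uniqueness theorem. Let $\pi^+ : X_\beta \to X_\beta^+$ be the future projection and $\pi^{++} : X_\beta^+ \to [0,1)$ be defined by $\pi^{++}(a) = \sum_{n \ge 0} a_n \beta^{-(n+1)}$; set $\nu := (\pi^{++} \circ \pi^+)_* \mu$. The key observation is that two tail-equivalent points of $X_\beta^+$ project to points of $[0,1)$ differing by an element of the countable dense subgroup $\ZZ[1/\beta]$. Using the $\T$-invariance of $\mu$ together with a careful bookkeeping of the admissibility cocycle, I plan to show that $\nu$ is absolutely continuous with respect to Lebesgue measure and that its density satisfies the Perron-Frobenius identity $\mathcal{L}_\beta h = h$ for the transfer operator $\mathcal{L}_\beta$ of $T_\beta$. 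By Parry \cite{parry_beta} the unique $L^1$-solution up to scaling is $h_\beta$, so $\nu$ matches the Lebesgue projection of $\mu_\beta$. A symmetric argument applied to the past projection $\pi^- : X_\beta \to X_\beta^-$, combined with the past-future coupling enforced by $\T$-invariance, then pins down $\mu$ as $\mu_\beta$.

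\emph{Main obstacle.} The principal difficulty is establishing absolute continuity of $\nu$ and identifying the density as a Perron-Frobenius eigenvector. The $\ZZ[1/\beta]$-action on $[0,1)$ induced by $\T^+$ is only partially defined, because not every swap of initial digits preserves the lexicographic admissibility condition of $X_\beta$; the resulting cocycle must be handled with care. When $\beta$ is a Parry number, so that $X_\beta$ is sofic, this computation reduces to a finite-state Perron-Frobenius problem; for general $\beta$, however, the non-sofic coded structure forces one to work with the full infinite-state transfer operator machinery, and any degenerate boundary orbits (related to the $\beta$-expansion of $1$) must be controlled separately to rule out singular contributions to $\nu$.
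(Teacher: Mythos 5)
Your route is genuinely different from the paper's, but as written it has two gaps that are fatal rather than cosmetic, so I am treating it as incomplete. (A smaller remark on existence: for non-Parry $\beta$ the shift $X_\beta$ is not sofic and $\mu_\beta$ is not a Markov measure, so the ``Fibonacci-style'' computation does not obviously extend; the clean way to get existence is to quote Walters' result, cited in the paper, that the Parry measure is the unique Gibbs measure for $f=0$, which is exactly $\T$-invariance.)

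The first gap: reducing to one-sided marginals discards exactly the information you need. Knowing $(\pi^{++}\circ\pi^{+})_*\mu$ and the analogous past projection does not determine $\mu$ on the two-sided shift; two measures on $X_\beta$ can have identical past and future marginals while coupling past and future differently. The $\T$-invariance constraint lives on finite windows of $\ZZ$ straddling the origin, and the phrase ``past-future coupling enforced by $\T$-invariance'' is a restatement of the theorem, not an argument. To close this you would have to show directly that $\T$-invariance pins down $\mu$ on two-sided cylinders, and at that point the detour through $[0,1)$ and the transfer operator buys nothing.

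The second and more serious gap is that the step you yourself flag as the main obstacle is where all the work lies, and nothing in the proposal addresses the actual danger: an arbitrary $\T$-invariant measure could a priori concentrate on points whose pasts are eventually locked, e.g.\ points $x$ for which $x_{-n}^{-N}$ agrees with a prefix of $\omega(\beta)$ for all large $n$. On such points the admissible digit swaps degenerate and the partially defined $\ZZ[1/\beta]$-action yields no nonsingularity with respect to Lebesgue at all; the paper even notes that for a residual set of $\beta$ the \emph{topological} Gibbs relation is trivial, so the existence of enough holonomies on the support of $\mu$ is genuinely in question. The paper's proof spends most of its length precisely here: a Borel--Cantelli estimate (using $\mu([\omega_1^k])\le|L_\beta(k)|^{-1}$) shows that $\mu$ gives zero mass to points whose past does not infinitely often terminate in a word of $F_\beta(n)$, the set of $n$-words that can be followed by any admissible sequence; then swapping holonomies between words of $F_\beta(n)$, combined with the counting estimates $c\beta^{n}\le|F_\beta(n)|\le|L_\beta(n)|\le c^{-1}\beta^{n}$ obtained from the recursion governed by $\omega(\beta)$, give the two-sided bound $\mu([y]_{-k})=c^{\pm1}\beta^{-k}$, whence any two $\T$-invariant measures are mutually absolutely continuous and uniqueness follows. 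If you want to salvage your approach, the Borel--Cantelli step and the count of follower-complete words are the missing ingredients; the sofic versus non-sofic distinction and the infinite-state transfer operator are not needed.
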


\begin{proof}

For an integer $k$, let $L_\beta(k)$ denote the set of $k$-tuples
which can appear admissibly in a $\beta$-expansion, e.i the cylinder
$[y]_0 \subset X_\beta$ is nonempty. Let $F_\beta(k)$ denote the set
of $k$-tuples which can appear admissibly in $X_\beta$, and can be
followed admissibly by any admissible sequence. Also let
$$F_\beta = \left\{ (\ldots,x_{-n},\ldots,x,_{-2},x_{-1}) \in
\{1,\ldots,\lfloor \beta \rfloor\}^{-\NN}:~ \forall n<0~ x_n^0 \in
F_\beta(n)\right\}.$$ That is, $F_\beta$ is the set of left-infinite
sequences which can be followed admissibly by and admissible
sequence in $X_\beta$.

Assume $\mu \in \mathcal{P}(x_\beta)$ be  $\T$-invariant. Our
strategy is to show that $\mu$ satisfies various properties, which
eventually determine $\mu$ uniquely.

  Fix $n \in \mathbb{N}$. For $N>n$, define $p_N= \mu(\{x\in X_\beta :\;
x_{-n}^{-N}=\omega_{1}^{n-N+1}\})$. For any $y \in L_{\beta}(k)$
there exists a tail holonomy $[\omega_1^{k-1}]_0 \rightarrow [y]_0$,
so $\mu([\omega_1^{k-1}]_0]) \le \mu([y]_0)$. It follows that
$\mu([\omega_1^k]) \le \frac{1}{|L_\beta(k)|} = (\beta+o(1))^{-k}$,
so $\sum_{N} p_N $ is dominated by $\sum_k (\beta-\epsilon)^{-k} <
\infty$. By the Borel-Cantelli lemma we deduce that $\sum_{N} p_N <
\infty$, and so:
\begin{equation}\label{eq:beta_w_tail_null}
  \mu(\bigcap_{N>k}\bigcup_{n>N}\{x\in X_\beta :\; x_{-n}^{-N}=\omega_{1}^{n-N+1}\})=0
\end{equation}
Now let  $B_N= \bigcap_{n >N}\{x \in X_\beta :\; x_{\infty}^{-n}
\not\in F_\beta \}$.

Let $x \in B_N$. For any $n>N$ there exists $k\ge n$ so that
$x_{-k}^{-n}=w_1^{k-n+1}$. Thus, there exists a sequence of integers
$N \le n_1 < n_2 < \ldots$ so that
$x_{-n_{i+1}}^{-n_i}=\omega_1^{n_{i+1}-n_i+1}$.  We claim that for
any $i>0$, $x_{-n_i}^{N}=\omega_1^{n_i-N+1}$. This follows by
induction as follows:

 Suppose $x_{-n_i}^{N}=\omega_1^{n_i-N+1}$.
Since $x \in X_\beta$ we have:
$$\omega_1^{n_{i+1}-N+1}\succeq x_{-n_{i+1}}^{n}$$
On the other hand, since $\omega_n^\infty \preceq \omega_1^{\infty}$
for $n \ge 1$ :
$$x_{-n_{i+1}}^{n}=\omega_{1}^{n_{i+1}-n_{i}+1}\omega_1^{n_i-N+1}  \succeq \omega_1^{n_{i+1}-N+1}$$
this proves the claim.

It follows that $$B_n \subset \bigcap_{k > N}\bigcup_{n
>k}\{x_{-n}^{-N}= \omega_1^{n-N+1}\}.$$ By equation
\eqref{eq:beta_w_tail_null}, this implies that $\mu(B_N)=0$ for any
$N >0$.  Thus,
\begin{equation}\label{eq:beta_full_tail_sure}
\mu( \bigcap_{N>0}\bigcup_{n>N}\{ x \in X_\beta :x_{-\infty}^{-n}
\in F_\beta\})=1
\end{equation}

For $n
> |y|$, let $A_n=\{x \in X_\beta :\;
x_{-\infty}^{-n} \in F_\beta\}$. Let:
$$ f_n: = |F_\beta(n)|,~ b_n = |L_\beta(n)|$$

Fix $y \in F_\beta(k)$. Define:  $$F_n(y):= \{ a \in F_\beta(n):\;
a_{n-k+1}^{k}=y\},~ B_n(y) = \{ a \in L_\beta(n):\;
a_{n-k+1}^{k}=y\},$$ $$ f_n(y) = | F_N(y)|,~  b_n(y) = |B_n(y)|.$$
We now prove the following inequalities:
\begin{equation}\label{qe:beta_cylinder_mu_bounds}
 \frac{f_n(y)}{f_n} \le \mu( [y]_{-k} | A_n) \le \frac{b_n(y)}{b_n}
\end{equation}

 For any $y_1,y_2 \in F_\beta(n)$, there is a natural
$\mathcal{T}(X_\beta)$-holonomy $[y_1]_{-n}\cap A_n \leftrightarrow
[y_2]_{-n} \cap A_n$ defined by changing the $n$ coordinates in the
interval $[-n,-1]$ from $y_1$ to $y_2$. This function is well
defined, because if $x_- \in F_\beta$ and $a \in F_\beta(n)$ then
$x_-a \in F_\beta$. It follows that
$\mu([y_1]_{-n}|A_n)=\mu([y_2]_{-k}|A_n)$ for any $y_1,y_2 \in
F_\beta(n)$. Similarly, if $y_1 \in L_\beta(n)$ and $y_2 \in
F_\beta(n)$ then $\mu([y_1]_{-n}|A_n) \le \mu([y_2]_{-k}|A_n)$.
 Since $\biguplus_{a \in D_n}[a]_{-n} \subset [y]_{-k} \subset \biguplus_{ a \in
 E_n}[a]_{-n}$,  the inequality \eqref{qe:beta_cylinder_mu_bounds} follows.

 Let us estimate $f_n$,$b_n$,$f_n(y)$ and $b_n(y)$. The
 following recursive formulas hold for $n \ge k$:
 $$f_n(y)= \sum_{i=1}^{n-|a|}\omega_i d_{n-i} + 1_{[\omega_{n+1}^{n+k} \prec
y]}$$
$$b_n(y)= \sum_{i=1}^{n-|a|}\omega_i e_{n-i} + 1_{[\omega_{n+1}^{n+k}
\preceq y]}$$ with $f_k=e_k = 1$. Define sequences $u_n$,$v_n$ by:

$$u_n=\sum_{i=1}^{n-1}\omega_i u_{n-i} + 1 \mbox{ and }
 v_n=\sum_{i=1}^{n-1}\omega_i v_{n-i} $$ with initial condition
$u_k=v_k=1$. We have the following inequalities: $v_n \le f_n(y) \le
b_n(y) \le v_n$. Denote $x_n= \frac{v_n}{\beta^n}$. From the
recursive formula of $v_n$ it follows that:
$$x_n=\sum_{k=1}^{n-1}\frac{\omega_k}{\beta^k}x_{n-k},$$
and the initial condition $x_k =\frac{1}{\beta^k}$.

 Since $0<1- \sum_{k=1}^{n}\frac{\omega_k}{\beta^k} < \frac{1}{\beta^n}$,
 $x_{n+1} \ge \min_{1 \le i \le n}x_i
(1-\beta^{-n})$, So by induction: $x_{n} \ge
\frac{1}{\beta^k}\prod_{k=1}^{n}(1-\beta^{-k})$. We conclude that
$c= \inf_n x_n \ge x_1 \prod_{j=1}^{\infty}(1-\beta^{-j})>0$, so for
this $c$, $c\beta^{n-k} \le v_n \le \beta^{n-k} $

By induction, we show that $u_n= \sum_{j=1}^n v_n$. So $v_n \le
\beta^{-k} \sum_{k=1}^n\beta^{-j} \le
\beta^{-k}\frac{\beta}{\beta-1}$, and the ratios $\frac{u_n}{v_n}$
are bounded as follows:
$$1 \le \frac{u_n}{v_n} \le  c^{-1}\frac{\beta}{\beta-1}$$

The following recursions also hold:
$$ f_{n+1}= \sum_{j=1}^{n}\omega_j f_{n-j} \mbox{ and } b_{n+1}= \sum_{j=1}^{n}\omega_j b_{n-j} +1$$
with initial conditions $f_1=\omega_1$ and $b_1 = \omega_1 +1$.

 Repeating the above
calculations, we see that $c \beta^{n} \le f_{n} \le \beta^{n}$ and
$$1 \le \frac{b_n}{f_n} \le  c^{-1}\frac{\beta}{\beta-1}$$
it follows that
$$\frac{f_n(y)}{f_n} \ge
\beta^{-k}c^{-1}\frac{\beta}{\beta-1}$$ and
$$ \frac{b_n(y)}{b_n} \le c\beta^{-k}\frac{\beta-1}{\beta}$$

So by the inequalities \eqref{qe:beta_cylinder_mu_bounds}, it
follows that for some $c>0$ depending on  $\beta$,  $\mu([y]_{-k}|
A_n)= c^{\pm 1}\beta^{-k}$. By equation
\eqref{eq:beta_full_tail_sure}, $\mu(\bigcup_{n} A_n) =1$. It thus
follows that for some positive $c$,
\begin{equation}\label{eq:beta_abs_cont_full_cylinders}
   \mu([y]_{-k})= c^{\pm 1}\beta^{-k}.
\end{equation}

From equation \eqref{eq:beta_full_tail_sure}, it follows that any
cylinder (thus any Borel set), can be approximated by a union of
cylinders of the form $[y]_k$ with $y \in F_\beta(n)$ for some $n$.
Concluding, we have shown that any two $\T$-invariant probability
measures on $X_\beta$ are absolutely continuous. This implies that
there is at most one such measure - which we know must be the unique
equilibrium.
\end{proof}

\section{\label{sec:dyck} A Lanford-Ruelle theorem for The Dyck-Shift}
The Dyck-shift is a certain subshift whose origin is in the study of
formal languages. The Dyck shift is an interesting example to study
when attempting to extend a theory for which applies to SFTs, as in
\cite{hamachi_inoue_dyck,krieger_74,krieger_matsumoto_2003,meyerovitch_dyck}.
For completeness, we write the definition of the Dyck-shift $D=D_2
\subset S^\mathbb{Z}$:

Let $S=\{\alpha_1,\alpha_2,\alpha_1^{-1},\alpha_2^{-1}\}$. Define an
associative operation $$\ast: S^*\cup \{0\} \times S^* \cup \{0\}
\to S^* \cup \{0\}$$ via concatenation subject to the following
reduction rules:\\
\begin{enumerate}
\label{def_m}
  \item \(\alpha_j \ast \alpha_j^{-1} = \Lambda\).
  \item \(\alpha_i \ast \alpha_j^{-1} = 0  \) for $i \ne j$.
\end{enumerate}

where $\Lambda$ denotes the empty word.

$D$ is defined by forbidding any word which reduces to $0$ by
application of the reduction rules above. A suggestive
interpretation is to regard the $\alpha_i$'s as two type of
``open-brackets'' and their inverses as closing brackets. Think of
$\alpha_1$ is a ``round open bracket'', of $\alpha_2$ as a ``square
open bracket'' and of the $\alpha_i^{-1}$'s as closed brackets of
the corresponding types.

There are $2$ ergodic measures of maximal entropy for $D$, as shown
in \cite{krieger_74}. In \cite{meyerovitch_dyck} all $\T$-invariant
measures for the Dyck-shift were explicitly described. This set of
$\T$-invariant measures consists the convex-hull of the two measures
of maximal entropy plus a third ergodic shift-invariant measure.

A function $f:D \to \mathbb{R}$ is a \emph{site-potential} if it is
of the form
$$f(x) = \begin{cases}
  f_1 & x_0 = \alpha_1\\
  f_2 & x_0 = \alpha_2\\
  f_3 & x_0 = \alpha_1^{-1}\\
  f_4 & x_0 = \alpha_2^{-1}\\
\end{cases}$$

where $f_1,\ldots,f_4 \in \RR$.

We now show that the following restricted Lanford-Ruelle-type
theorem holds for the Dyck-Shift:

\begin{prop}\label{prop:dyck_lanford_ruelle}
If $f:D \to \mathbb{R}$ is a site-potential, then any
$f$-equilibrium is $f$-Gibbs.
\end{prop}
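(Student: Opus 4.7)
The plan is to start from Theorem~\ref{thm:local_lanford_ruelle}, which supplies $(\phi_f,\T^0)$-conformality of the equilibrium $\mu$ for free, and then obtain the remaining Radon-Nikodym identities along $\T\setminus\T^0$ by exploiting the site-potential hypothesis together with the bracket-matching structure of the Dyck shift.

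First, using Lemma~\ref{lem:swap_inv_generate}, I would describe $\T^0$ concretely on $D$: the involution $\xi_{a,b}\in\mathcal{F}(D)$ is globally well-defined iff the words $a,b$ reduce to the same canonical form $\alpha_{i_1}^{-1}\cdots\alpha_{i_k}^{-1}\alpha_{j_1}\cdots\alpha_{j_l}$ under the Dyck cancellation rules --- essentially because whether $LaR\in D$ depends on $a$ only through its reduction. Hence $(x,y)\in\T^0$ iff $x,y$ agree outside a finite window $W$ and $x|_W, y|_W$ reduce to the same word. The complementary relation $\T\setminus\T^0$ is generated by \emph{unmatched-bracket swaps}: holonomies that change the type of an unmatched bracket (for instance, swapping an unmatched $\alpha_1$ with an $\alpha_2$ at some coordinate) which are defined on the positive-measure set of points where the surrounding matching accommodates the change.

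Second, to verify the cocycle on this extra family of generators, I would combine three ingredients: the transparent form of the cocycle of a site potential, $\phi_f(x,y)=\prod_{n:\,x_n\ne y_n}\exp(f(x_n)-f(y_n))$, which depends only on one-dimensional symbol statistics; the $\T^0$-conformality and shift-invariance of $\mu$ (the latter being automatic for an equilibrium); and the explicit description of shift- and $\T$-invariant measures on $D$ from \cite{meyerovitch_dyck}. Together these should pin down $\mu$ as an explicit ``weighted coded'' measure --- a Markov-like measure built by concatenating balanced Dyck blocks with weights $e^{f_i}$ assigned to the bracket types --- which is manifestly $(\phi_f,\T)$-conformal.

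The main obstacle is this final identification step. The classification of \cite{meyerovitch_dyck} directly addresses $f\equiv 0$, so extending it to arbitrary site-potential weights $f_1,\ldots,f_4$ likely requires either a continuation argument from $f\equiv 0$ along a family of equilibria, or a direct construction of the tilted equilibrium via weighted coding by balanced Dyck blocks --- with the Gibbs property then read off from the construction. The restriction to \emph{site} potentials is essential throughout: it is what keeps $\phi_f$ a function of one-dimensional symbol statistics alone, matching the combinatorial structure of the coded description of $D$.
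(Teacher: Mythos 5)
Your strategy is genuinely different from the paper's: the paper's proof of Proposition \ref{prop:dyck_lanford_ruelle} does not invoke Theorem \ref{thm:local_lanford_ruelle} at all. Instead it parametrizes an arbitrary ergodic shift-invariant measure by the symbol frequencies $\mu_1,\ldots,\mu_4$ together with the conditional type-probabilities $\mu_1^+,\mu_2^+$ of a globally unmatched open bracket, bounds $h(\mu)$ via Rokhlin's formula with equality only for an explicitly described process, and then maximizes the resulting pressure function $P(\mu_1^+,\mu_3,\mu_4)$ by calculus; the stationarity equations \eqref{eq:dyck_gibbs1}, \eqref{eq:dyck_gibbs2}, \eqref{eq:dyck_gibbs3} are then observed to be exactly the $(\phi_f,\T)$-conformality conditions. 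Taking a different route is not in itself a problem, and your structural picture of $\T^0_D$ (swaps $\xi_{a,b}$ legal precisely when $a$ and $b$ have the same Dyck reduction) versus the extra freedom in $\T_D$ (retyping globally unmatched brackets) is essentially correct and would be a reasonable way to organize a proof.

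The gap is at the decisive step. The conformality identity along an unmatched-bracket swap --- concretely, that the equilibrium satisfies $\mu_1^+/\mu_2^+=\exp(f_1-f_2)$, which is equation \eqref{eq:dyck_gibbs2} --- cannot be extracted from $(\phi_f,\T^0)$-conformality, from shift-invariance, or from the classification in \cite{meyerovitch_dyck}. The first two constrain only reduction-invariant (matched-bracket) statistics and are blind to the type distribution of unmatched brackets, while the cited classification concerns $\T$-invariant measures, i.e.\ the cocycle identically $1$, and therefore says nothing about which tilted measure the equilibrium for a nonconstant $f$ selects. The identity \eqref{eq:dyck_gibbs2} is forced only by the variational property of $\mu$, and your proposal defers exactly this verification to an unspecified ``continuation argument'' or ``direct construction of the tilted equilibrium,'' which is where all of the actual work lies. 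To close the argument along your lines you would still have to do what the paper does: express the pressure as an explicit function of the remaining free parameters (after your reduction, essentially $\mu_3,\mu_4,\mu_1^+$) and solve $\partial P/\partial\mu_1^+=0$ to obtain the Gibbs condition on unmatched brackets; the $\T^0$-conformality you get for free can at best account for relations among matched-bracket frequencies of the type \eqref{eq:dyck_gibbs1}.
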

\begin{proof}
Let $\mu\in \mathcal{P}(D,T)$ be an ergodic shift-invariant measure,
and denote $\mu_i =\mu(f^{-1}(f_i))$, $\mu+=\mu_1 + \mu_2$ and
$\mu_-=\mu_3 + \mu_4$.

With out loss of generality, suppose $\mu_+ \ge \mu_-$. By
ergodicity of $\mu$ this means that almost surely the frequency of
open brackets is at least equal to that of closed brackets.
 It follows that almost-surely there are no unmatched closed brackets, thus $\mu_1 \ge \mu_3$ and
$\mu_2 \ge \mu_4$. Define two more parameters of the measure $\mu$:
$\mu_1^+$ is the probability that coordinate $0$ of $x$ is a square
bracket, given that it is an open bracket which is unmatched.
$\mu_2^+=1-\mu_1^+$ is the probability that coordinate $0$ of $x$ is
a round bracket, given that it is an open bracket which is
unmatched.

The integral of $f$ with respect to $\mu$ can be expressed by the
parameters:
$$\mu(f)= \int fd\mu = \sum_{i=1}^4f_i\mu_i$$
Using Roklin's formula for relative entropy we have:
$$h(\mu) \le
H(\mu_+,\mu_-)+(\mu_+-\mu_-)H(\mu_1^+,\mu_2^+)+\mu_-H(\frac{\mu_3}{\mu_3+\mu_4},\frac{\mu_4}{\mu_3+\mu_4})$$
$$=H(\mu_3,\mu_4,\mu_+)+(\mu_+-\mu_-)H(\mu_1^+,\mu_2^+)$$

Equality in the above holds  iff the partition of $D$ according to
the direction (open /close) of the first bracket is i.i.d and the
types of brackets are independent jointly independent accept for the
obvious restriction that matching brackets are of the same type.

Observe that the probability of having an unmatched open bracket at
coordinate $0$ is $\mu_+ - \mu_-$. By ergodicity of $\mu$,
almost-surely the frequency of matched opening round brackets is
equal to the frequency of closing round brackets, so the probability
of a matched opening  round bracket is $\mu_1$. The discussion above
yields the following relation between our parameters:
$$\mu_1=\mu_1^+(\mu_+-\mu_-)+\mu_3$$
By repeating a similar argument for square brackets, or using the
linear relations between the parameters, we also obtain:
$$\mu_2=\mu_2^+(\mu_+-\mu_-)+\mu_4$$

 It follows that the
$f$-pressure of $\mu$ satisfies $P_\mu(f) \le
P(\mu_1^+,\mu_3,\mu_4)$, where:
$$P(\mu_1^+,\mu_3,\mu_4) = H(\mu_+,\mu_3,\mu_4)+
(\mu_+-\mu_-)H(\mu_1^+,\mu_2^+)+\sum_{i=1}^4f_i\mu_i$$
%

There is a unique shift-invariant measure $\mu \in \mathcal{P}(D)$
with specified parameters $\mu_1,\ldots,\mu_4,\mu_1^+,\mu_2^+$ for
which the above inequality is an equality. To describe this measure,
it is will be convenient to describe the stochastic process
$x=(x_n)_{n \in \ZZ}$ such that $x \in D$:  Let $$y_n =
\begin{cases}
  a & \mbox{if } x_n=\alpha_1 \mbox{ or } x_n=\alpha_2\\
  b & \mbox{if } x_n=\alpha_1^{-1} \mbox{ or } x_n=\alpha_2^{-1}\\
\end{cases}$$
Under the distribution of $\mu$, the process $(y_n)_{n \in \ZZ}$ is
i.i.d with marginal distribution $(\mu_+,\mu_-)$. Given a
realization of factor process $y$, whenever $y_n=b$,
$$\mu(x_n =\alpha_1^{-1})=\mu_3/\mu_-,$$
 and
 $$\mu(x_n= \alpha_2^{-1})=\mu_4/\mu_-,$$
 independently of the other
coordinates. Given that $y_n=b$ and $x_n$ does not have a matching
closing bracket, then the probability that $x_n=\alpha_1$ is
$\mu_1^+$. This completes the description of $\mu$, depending on the
parameters, since if $y_n=b$ and the $\alpha_i$ at $x_n$ has a
matching $\alpha_i^{-1}$, then $x_n$ is determined according to the
type of the closing bracket. It follows that an $f$-equilibrium
measure will be of the form $\mu$ above, for which the parameters
$(\mu_1,\ldots,\mu_4,\mu_1^+,\mu_2^+)$ maximize the expression
$P(\mu_1,\ldots,\mu_4,\mu_1^+,\mu_2^+)$, under the obvious linear
restrictions on these parameters. It is now an elementary problem to
maximize this expression. Using the restrictions on parameters, we
can reduce to three parameters $\mu_3,\mu_4,\mu_1^+$, which are
independent and need only satisfy $\mu_3 \ge 0$, $\mu_4 \ge 0$,
$\mu_3+\mu_4 \le \frac{1}{2}$ and $0 \le \mu_1^+ \le 1$.

The following expressions are the partial derivatives of $P$:
$$\frac{\partial P}{\partial \mu_3} =  \log\left(
\frac{1-\mu_3-\mu_4}{\mu_3}\right)-2H(\mu_1^+,1-\mu_1^+)+$$
$$+(f_1+f_3)-2f_1\mu_1^+-2f_2(1-\mu_1^+)$$
$$\frac{\partial P}{\partial \mu_4} =  \log\left(
\frac{1-\mu_3-\mu_4}{\mu_4}\right)-2H(\mu_1^+,1-\mu_1^+)+$$
$$+(f_2+f_4)-2f_1\mu_1^+-2f_2(1-\mu_1^+)$$
 and
$$\frac{\partial P}{\partial \mu_1^+} = (1-2(\mu_3 +\mu_4))\log\left(
\frac{1-\mu_1^+}{\mu_1^+}\right)+(f_1-f_2)(1-2(\mu_3 +\mu_4))$$

The equation $\frac{\partial P}{\partial \mu_3} - \frac{\partial
P}{\partial \mu_4}=0$ gives
\begin{equation}\label{eq:dyck_gibbs1}
  \frac{\mu_3}{\mu_4}= \exp(f_1+f_3 - (f_2+f_4)).
\end{equation}

The equation $\frac{\partial P}{\partial \mu_1^+} =0$ gives
\begin{equation}\label{eq:dyck_gibbs2}
\frac{\mu_1^+}{\mu_2^+} = \exp( f_1 -f_2)
\end{equation}

Substituting $\log(\mu_1^+) = \log(\mu_2^+) +f_1 -f_2$ into the
equation $\frac{\partial P}{\partial \mu_3} =0$, we get:
$$0=\log \mu_+ -\log \mu_3
+\mu_1^+(f_1-f_2+\log\mu_2^+)+2\mu_2^+\log \mu_2^+ +f_1 +f_3 -
2f_1\mu_1^+ -2f_2\mu_2^+.$$ Simplifying and rearranging the above
equation, we get:
\begin{equation}\label{eq:dyck_gibbs3}
  \frac{(\mu_+\mu_2^+)^2}{\mu_+\mu_3} = \exp(f_1 +f_3 -2f_2)
\end{equation}

Equations \eqref{eq:dyck_gibbs1},\eqref{eq:dyck_gibbs2} and
\eqref{eq:dyck_gibbs3} are necessary and sufficient conditions on
the parameters $\mu_3,\mu_4,\mu_1^+$ for $\mu$ to be $(\log
f,\T)$-conformal.

\end{proof}

\bibliographystyle{abbrv}
\bibliography{gibbs_subshifts}

\begin{thebibliography}{10}

\bibitem{aaro-nakada-2007}
J.~Aaronson and H.~Nakada.
\newblock Exchangeable, gibbs and equilibrium measures for markov subshifts.
\newblock {\em Ergodic Theory Dynam. Systems}, 27(2):321--339, 2007.

\bibitem{bowen_equlibrium}
R.~Bowen.
\newblock Some systems with unique equilibrium states.
\newblock {\em Math. Systems Theory}, 8(3):193--202, 1974/75.

\bibitem{burton_steif94}
R.~Burton and J.~E. Steif.
\newblock Non-uniqueness of measures of maximal entropy for subshifts of finite
  type.
\newblock {\em Ergodic Theory Dynam. Systems}, 14(2):213--235, 1994.

\bibitem{dobrusin_gibbs}
R.~L. Dobru{\v{s}}in.
\newblock Gibbsian random fields for lattice systems with pairwise
  interactions.
\newblock {\em Funkcional. Anal. i Prilo\v zen.}, 2(4):31--43, 1968.

\bibitem{feldman_moore_77_i}
J.~Feldman and C.~C. Moore.
\newblock Ergodic equivalence relations, cohomology, and von {N}eumann
  algebras. {I}.
\newblock {\em Trans. Amer. Math. Soc.}, 234(2):289--324, 1977.

\bibitem{hamachi_inoue_dyck}
T.~Hamachi and K.~Inoue.
\newblock Embedding of shifts of finite type into the {D}yck shift.
\newblock {\em Monatsh. Math.}, 145(2):107--129, 2005.

\bibitem{kalikow82}
S.~A. Kalikow.
\newblock {$T,\,T\sp{-1}$} transformation is not loosely {B}ernoulli.
\newblock {\em Ann. of Math. (2)}, 115(2):393--409, 1982.

\bibitem{krieger_74}
W.~Krieger.
\newblock On the uniqueness of the equilibrium state.
\newblock {\em Math. Systems Theory}, 8(2):97--104, 1974/75.

\bibitem{kreiger_dimension_1980}
W.~Krieger.
\newblock On dimension functions and topological {M}arkov chains.
\newblock {\em Invent. Math.}, 56(3):239--250, 1980.

\bibitem{krieger_matsumoto_2003}
W.~Krieger and K.~Matsumoto.
\newblock A lambda-graph system for the {D}yck shift and its {$K$}-groups.
\newblock {\em Doc. Math.}, 8:79--96 (electronic), 2003.

\bibitem{lanford_robinson_68}
O.~E. Lanford, III. and D.~W. Robinson.
\newblock Statistical mechanics of quantum spin systems. {III}.
\newblock {\em Comm. Math. Phys.}, 9:327--338, 1968.

\bibitem{lanford_ruelle}
O.~E. Lanford, III and D.~Ruelle.
\newblock Observables at infinity and states with short range correlations in
  statistical mechanics.
\newblock {\em Comm. Math. Phys.}, 13:194--215, 1969.

\bibitem{marcus_newhouse_78}
B.~Marcus and S.~Newhouse.
\newblock Measures of maximal entropy for a class of skew products.
\newblock In {\em Ergodic theory (Proc. Conf., Math. Forschungsinst.,
  Oberwolfach, 1978)}, volume 729 of {\em Lecture Notes in Math.}, pages
  105--125. Springer, Berlin, 1979.

\bibitem{meyerovitch_dyck}
T.~Meyerovitch.
\newblock Tail invariant measures of the {D}yck shift.
\newblock {\em Israel J. Math.}, 163:61--83, 2008.

\bibitem{parry_beta}
W.~Parry.
\newblock On the {$\beta $}-expansions of real numbers.
\newblock {\em Acta Math. Acad. Sci. Hungar.}, 11:401--416, 1960.

\bibitem{petersen_schmidt97}
K.~Petersen and K.~Schmidt.
\newblock Symmetric {G}ibbs measures.
\newblock {\em Trans. Amer. Math. Soc.}, 349(7):2775--2811, 1997.

\bibitem{ruelle_themo}
D.~Ruelle.
\newblock {\em Thermodynamic formalism}.
\newblock Cambridge Mathematical Library. Cambridge University Press,
  Cambridge, second edition, 2004.
\newblock The mathematical structures of equilibrium statistical mechanics.

\bibitem{schmeling_beta}
J.~Schmeling.
\newblock Symbolic dynamics for {$\beta$}-shifts and self-normal numbers.
\newblock {\em Ergodic Theory Dynam. Systems}, 17(3):675--694, 1997.

\bibitem{schmidt_super_k}
K.~Schmidt.
\newblock Invariant cocycles, random tilings and the super-{$K$} and strong
  {M}arkov properties.
\newblock {\em Trans. Amer. Math. Soc.}, 349(7):2813--2825, 1997.

\bibitem{schmidt_beta}
K.~Schmidt.
\newblock Algebraic coding of expansive group automorphisms and two-sided
  beta-shifts.
\newblock {\em Monatsh. Math.}, 129(1):37--61, 2000.

\bibitem{walters_g_func}
P.~Walters.
\newblock Ruelle's operator theorem and {$g$}-measures.
\newblock {\em Trans. Amer. Math. Soc.}, 214:375--387, 1975.

\bibitem{walters_var}
P.~Walters.
\newblock A variational principle for the pressure of continuous
  transformations.
\newblock {\em Amer. J. Math.}, 97(4):937--971, 1975.

\bibitem{walters_beta}
P.~Walters.
\newblock Equilibrium states for {$\beta $}-transformations and related
  transformations.
\newblock {\em Math. Z.}, 159(1):65--88, 1978.

\end{thebibliography}
\end{document}